\newtheorem{theorem}{Theorem}[section]
\newtheorem{proposition}[theorem]{Proposition}
\newtheorem{definition}[theorem]{Definition}
\newtheorem{corollary}[theorem]{Corollary}
\newtheorem{lemma}[theorem]{Lemma}
\numberwithin{equation}{section}
\theoremstyle{remark}
\newtheorem{remark}[theorem]{Remark}
\newcommand{\R}{\mathbb{R}}
\newcommand{\C}{\mathbb{C}}
\newcommand{\D}{\mathbb{D}}
\newcommand{\E}{\mathbb{E}}
\begin{document}

\title[Different base points for DPW]{\bf{Dependence of the loop group method on the base point}}
\author{Josef F. Dorfmeister}
\thanks{
}

\maketitle

\begin{abstract}
	We discuss, how the loop group method for harmonic maps $f: M \rightarrow \mathcal{S}$ 
	into symmetric spaces depends on the choice of  a base point $z_* \in M$.
	
	Actually, we consider two ways of discussing this dependence: 
	
	$\bullet$ The first way follows strictly the procedure of \cite{DPW}, but we do not assume from the beginning that the symmetric space $\mathcal{S}$ is realized a proiri as a fixed quotient $G/K$
	
	In this approach one chooses first a basepoint, say $z_0$ and then considers the natural realization 
	of the symmetric space $\mathcal{S}$  as $\mathcal{S} = G/K$, where $K$ denotes the fixed point 
	group of $G$ at the point $f(z_0) \in \mathcal{S}.$ 
	
		Given another base point $z_1 \in M$, one makes the analogous choices.
	
	The result of the first part of the paper is a dictionary, how to translate the description/realization
	  of $f$  relative to $z_0$ to the one relative to  $z_1$.

	$\bullet$ The second way, more convenient for explicit computations, see, e.g., \cite{Wang-a}, 
	is discussed in the second part of the paper. 
	
	In this case one realizes the symmetric space $\mathcal{S}$ once and for all as a fixed quotient space.
	We show that the corresponding frames are related by dressing. 
	
	In both cases we explain what these realizations mean for the construction of the compact dual of the symmetric space $\mathcal{S}$.
	
	To avoid repeated distinctions of cases we assume throughout that $\mathcal{S}$ is an inner 
	symmetric space. Moreover, since all frames and potentials are defined only on the universal 
	cover $\D$ of $M$, we use $M = \D$ unless the opposite is stated explicitly.
	\end{abstract}

%

 {\bf \ \ ~~Keywords:}  Duality; Iwasawa decomposition; normalized potential;
  non-compact symmetric space;.     \vspace{2mm}

{\bf\   ~~ MSC(2010): \hspace{2mm} 53A30, 53C30, 53C35}

\section{Introduction}

The ``DPW-procedure"  \cite{DPW} has been applied to the discussion of the construction of many types of surfaces ( with or without symmetries), considered as submanifolds of certain specific manifolds. More precisely, we consider surface
classes  for which a harmonic  "Gauss type map" exists which characterizes the given surface of its class.
Examples for this are, among many others,  the surfaces of constant mean curvature in $\R^3$, 
(CMC surfaces in $\R^3$) \cite{DoHaMero},  the pseudo-spherical surfaces in $\R^3,$ \cite{DIS}
the minimal surfaces in the Heisenberg group \cite{DoInKo:Nil-Asian}, the indefinite proper affine spheres \cite{DoEi}, the ``unfashionable geometries"\cite{Bu-Jer}.

For the discussion of surface classes one considers primarily  harmonic ``Gauss type "  maps into semi-simple symmetric spaces. In this note we consider only cases, where the associated symmetric space is inner.

It is natural to normalize the setting by  choosing, once and for all, a fixed base point $z_0$ 
in the simply connected Riemann surface $\D$ of definition of the harmonic map $f: \D \rightarrow G/K$  and to require (w.l.g. up to congruence) that $f(z_0)= eK$ holds, where $e$ is the identity element of $G$.

The procedure suggested in \cite{DPW} was quite successful in many ways.

But it was never discussed, in what way the discussion needs to be adapted, if the fixed base point is chosen different from the originally chosen one.

This note gives two answers to this issue:

$\bullet$ First, by just following the DPW-procedure for different base points and describing the natural "isomorphisms",
from any type of geometric quantity related to one base point to the corresponding one of the second base point.

$\bullet$ Second, since a base point singles out a realization of the given symmetric space $\mathcal{S}$
 as a quotient space,  one can therefore ask, how one can handle a new base point in the environment of a 
 fixed   realization  $\mathcal{S} = G/K$.

In the first section we present a slightly modified loop group procedure for harmonic maps into inner
semisimple symmetric spaces, by starting from a base point $z_0$ and then realizing 
$\mathcal{S} \cong G/K$ with $K$ the stabilizer subgroup of $G$ relative to $f(z_0$.

 In section 2 we consider a second base point and compare the two loop group procedures associated to 
 these two  basepoints.
 
In section 3 we discuss how one can describe the loop group procedures for two different base points, if the 
symmetric space is realized in the form $G/K$ for both basepoints.

In the end we show how the harmonic maps into the compact dual symmetric space \cite{DoWa:Dual}
change with the choice of the base point.




\section{The (slightly generalized) loop group method for harmonic maps  $f:\D \rightarrow \mathcal{S}$}

In this section we assume (up to the last subsection)  that $\D$ is a contractible Riemann surface,
 i.e. the unit disk $\mathbb{E}$ or the whole  complex plane $\C$.
The last subsection will be devoted to the case $\D = S^2$.

If one considers harmonic maps defined on some Riemann surface $M$ which have nontrivial topology, 
then in the loop group approach one needs to consider the associate family of the harmonic maps on the universal cover $\D$ of $M$. 
Note that in general, a harmonic map of the associate family will not descend to the original Riemann surface $M$.
(There are some exceptions to this principle, but we can ignore them for the purposes of this paper.)
\subsection{The  (slightly generalized) basic DPW procedure for harmonic maps defined on contractible Riemann surfaces} \label{sect:1}


 
Let $\D$ be a contractible Riemann surface (different from $S^2$ ) and $\mathcal{S}$ a 
(connected) semisimple inner symmetric space, compact or non-compact.
The definition involves a fixed (connected) real semisimple Lie group G and a stabilizer subgroup $K$ such that
$\mathcal{S} \cong G/K$. For the purposes of this note we can assume that $K$ is connected.

 For more information about such spaces we refer to \cite{Berger}.
 
Finally, let $f: \D \rightarrow \mathcal{S}$ be a harmonic map.

We choose once and for all a base point $z_0 \in \D$ and obtain a corresponding 
base point $f(z_0) \in \mathcal{S}$.

To make connection with the ``classical" loop group method \cite{DPW} 
we consider the stabilizer $K_0 = K(z_0)$ of $f(z_0)$ in $G$, where
$G$ is a connected real semisimple Lie group acting transitively on the symmetric space $\mathcal{S}$
containing the group of isometries relative to the pseudo-Riemannian metric associated to $\mathcal{S}$.

Then the diagram 

\[
\begin{tikzcd}[column sep=4em,row sep=4em]
G  \ar{r}{\beta_0}  
    \ar{d}[swap]{ \pi_0}   &   \mathcal{S}     \\
G/{K_0}   \ar{ru}{j_0} 
\end{tikzcd}
\]

is commutative, where $\beta_0(g) = g.f(z_0)$ for $g \in G$, $\pi_0$ is the natural projection and $j_0$ is the 
natural choice of making the diagram commutative.
Note that $j_0$ is a diffeomorphism and equivariant relative to the actions of $G$ on
$G/{K_0}$ and $\mathcal{S}$ respectively.

We put $f_0 = j_0^{-1} \circ f$ and  obtain the harmonic map $f_0 : \D \rightarrow G/{K_0}.$ 
Note that $f_0$ satisfies

\begin{equation}
f_0(z_0) = eK_0 \in G/{K_0},
\end{equation}

where $e$ denotes the identity element of $G.$

At this point we have obtained the setting usually used for the discussion of harmonic maps by the loop group method.

We recall now briefly the steps of the loop group method, since we will need the notation and further 
details in the next subsection.

\subsection{Step 1: } Choose any frame $F_0(z, \bar z) : \D \rightarrow \Lambda G_\sigma$ of $f_0(z, \bar z)$ taking values in 
$G$ for all $z \in \D$ and satisfying  $F_0(z_0,{ \bar z}_0) = e.$

Note that this frame then satisfies for all $z \in \D$
$$ F_0(z, \bar z).eK_0  = f_0(z, \bar z).$$

 Next we consider the Maurer Cartan form of $F_0$,

\begin{equation} \label{MC of F_0}
\alpha_0 = F_0^{-1} dF.
\end{equation}

We will decompose this differential one-form below relative to the symmetry of the 
symmetric space $\mathcal{S}$.
More precisely, we consider the involutive automorphism $\sigma_0$ of $G$, 
equivalently of $Lie G,$ 
which defines the symmetric space structure of $G/{K_0}.$ In particular we have 
$Lie K_0 \subset  Fix_\sigma  (Lie G).$

\begin{remark}
According to what was said above we assume for our purposes in this paper that $K_0$ is
the connected component ${Fix_{\sigma_0}  (Lie G)}^0$ of $ Fix_{\sigma_ 0} (Lie G)$, since all 
other harmonic maps can be obtained by projection to the corresponding symmetric spaces.
\end{remark}

Putting 
$$\mathfrak{g} = Lie G \hspace{2mm} \mbox{and} \hspace{2mm} \mathfrak{k}_0 = Lie K_0$$
we obtain  
$\mathfrak{k}_0 =  \{x \in  \mathfrak{g} ;  \sigma_0 (x) = x \}$.

Setting $\mathfrak{p}_0 =  \{x \in  \mathfrak{g}_0 ;  \sigma_0 (x) = -x \}$  we derive

\begin{equation}
 \mathfrak{g} =  \mathfrak{k}_0  \oplus \mathfrak{p}_0. 
\end{equation}

 Moreover, $\alpha_0$ decomposes as
 
 \begin{equation}
  \alpha_0 = F_{0}^{-1} dF = \alpha_0^{\prime} + \alpha_{0,k} +   \alpha_0^{\prime \prime}.
 \end{equation}

  where the summands $\alpha_0^\prime$ and $\alpha_0^{''}$ are
  contained in $ \mathfrak{p}_0 $ and $\alpha_{0,k} \in \mathfrak{k}_0$.

\subsection{Introducing the loop parameter and the associated family of harmonic maps associated to $f$ relative to $z_0$}

Next we introduce the loop parameter $\lambda.$ 

In  geometric statements, $\lambda$ 
will be assumed to be an element of the unit circle $S^1,$ but in many cases it will be important that certain functions and 1-forms which depend on $\lambda$  can actually be defined on  
some open subset of $\C$, like $\C^*.$ In this note we will not address this issue,
but refer to the standard literature about the loop group method.

For the introcuction of  the loop parameter $\lambda$ as in \cite{DPW} we extend the frame $F_0$ of a harmonic map $f_0$  to the extended frame.

For this we consider the "loopified" Maurer-Cartan form $\alpha_\lambda$ of $F$:

\begin{equation}
\alpha_{0,\lambda} = F_{0,\lambda}^{-1} dF =\lambda^{-1} \alpha_{0}^\prime + \alpha_{0,k} +  \lambda \alpha_{0}^{\prime \prime}.
\end{equation}

Since we know (see e.g. \cite{DPW} ) that $f_0$ is harmonic if and only if $\alpha_{0,\lambda} $ 
is integrable for all  $\lambda \in S^1$ (equivalently all $\lambda \in \C^*$), we infer

\begin{lemma}
With $f, f_0, F_0$ and $\alpha_{0,\lambda}$ as above we can solve  the differential equation below for all $\lambda \in \C^*$
$$d F_{0,\lambda} = F_{0,\lambda} \alpha_{0,\lambda} \hspace{2mm} \mbox{satisfying} \hspace{2mm}
F_{0,\lambda}(z, \bar z) = e.$$
\end{lemma}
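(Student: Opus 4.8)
The plan is to reduce the claimed solvability to the standard integrability (Frobenius/Maurer--Cartan) theorem for Lie-group-valued connection forms, applied fiberwise in $\lambda$. First I would observe that $f_0$ harmonic is, by the characterization recalled in the excerpt (see \cite{DPW}), equivalent to the loopified form $\alpha_{0,\lambda}=\lambda^{-1}\alpha_0'+\alpha_{0,k}+\lambda\alpha_0''$ being integrable for every $\lambda\in\C^*$, i.e.\ satisfying the zero-curvature equation
\begin{equation}
d\alpha_{0,\lambda}+\tfrac12[\alpha_{0,\lambda}\wedge\alpha_{0,\lambda}]=0.
\end{equation}
Since $\D$ is contractible (it is $\mathbb{E}$ or $\C$ by the standing assumption of this subsection), the classical theorem on the existence of a primitive for a flat $\mathfrak{g}$-valued $1$-form then guarantees, for each fixed $\lambda\in\C^*$, a smooth map $F_{0,\lambda}:\D\to G$ with $dF_{0,\lambda}=F_{0,\lambda}\,\alpha_{0,\lambda}$, unique up to left translation by a constant element of $G$; normalizing that constant so that $F_{0,\lambda}(z_0,\bar z_0)=e$ pins it down. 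So the proof is essentially: check zero curvature for all $\lambda$, then invoke the existence theorem and fix the integration constant.

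The key steps in order: (1) recall that $\alpha_0=F_0^{-1}dF_0$ is automatically flat (Maurer--Cartan), and rewrite the flatness in terms of the $\mathfrak{p}_0$- and $\mathfrak{k}_0$-components $\alpha_0',\alpha_0'',\alpha_{0,k}$, using $[\mathfrak{k}_0,\mathfrak{k}_0]\subset\mathfrak{k}_0$, $[\mathfrak{k}_0,\mathfrak{p}_0]\subset\mathfrak{p}_0$, $[\mathfrak{p}_0,\mathfrak{p}_0]\subset\mathfrak{k}_0$ coming from the symmetric-space involution $\sigma_0$; (2) substitute the $\lambda$-weighted form $\alpha_{0,\lambda}$ and collect the curvature $d\alpha_{0,\lambda}+\tfrac12[\alpha_{0,\lambda}\wedge\alpha_{0,\lambda}]$ by powers of $\lambda$ (the relevant powers are $\lambda^{-2},\lambda^{-1},\lambda^0,\lambda^1,\lambda^2$), observing that the vanishing of all these coefficients is exactly equivalent to the harmonic map equation for $f_0$ together with the $\lambda$-independent flatness of $\alpha_0$ --- this is the content of the cited DPW characterization, so I would simply quote it rather than re-derive it; (3) apply, for each $\lambda\in\C^*$ separately, the standard result that a flat $\mathfrak{g}$-valued $1$-form on a simply connected manifold integrates to a $G$-valued map, obtaining $F_{0,\lambda}$; (4) impose $F_{0,\lambda}(z_0,\bar z_0)=e$ to remove the left-multiplicative ambiguity.

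The main obstacle is not any single hard estimate but rather the bookkeeping in step (2): one must make sure that the integrability condition genuinely holds for \emph{all} $\lambda\in\C^*$ and not merely on $S^1$, and that the decomposition into $\mathfrak{k}_0\oplus\mathfrak{p}_0$ is being used correctly so that the $\lambda^{\pm2}$ coefficients (which involve only $[\mathfrak{p}_0,\mathfrak{p}_0]$-type brackets of $\alpha_0'$ with itself, resp.\ $\alpha_0''$ with itself) vanish identically for type/degree reasons — namely $\alpha_0'$ and $\alpha_0''$ are $(1,0)$- and $(0,1)$-forms, so $\alpha_0'\wedge\alpha_0'=0$ and $\alpha_0''\wedge\alpha_0''=0$ as $2$-forms on a Riemann surface. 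Once that is in place, the $\lambda$-dependence is polynomial-in-$\lambda^{\pm1}$ with coefficients that are flat, and flatness for every $\lambda\in\C^*$ is immediate; the existence theorem then finishes it. A secondary point worth a sentence is smooth (indeed holomorphic) dependence of $F_{0,\lambda}$ on $\lambda$, which follows from smooth dependence of solutions of ODEs/the Frobenius system on parameters, but since the lemma as stated only asks for solvability for each fixed $\lambda$, I would mention this only in passing.
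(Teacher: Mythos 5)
Your proposal is correct and follows essentially the same route as the paper: the paper simply invokes the DPW equivalence ``$f_0$ harmonic $\Longleftrightarrow$ $\alpha_{0,\lambda}$ integrable for all $\lambda\in\C^*$'' and then integrates the flat form on the contractible domain $\D$, fixing the constant by the initial condition at $z_0$ (which is what the condition $F_{0,\lambda}(z,\bar z)=e$ in the statement is meant to say). Your additional bookkeeping of the $\lambda$-powers and the $(1,0)/(0,1)$ type argument is exactly the content of the cited DPW characterization, so nothing is missing.
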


The frame $F_{0,\lambda}$ is called ``the extended frame associated to $f_0$ relative to $z_0$".

We frequently write $F_{0,\lambda}(z, \bar z) = F_0(z, \bar z, \lambda)$ and analogously for other 
maps and 1-forms.

From here on we will usually consider  $\lambda-$dependent quantities and therefore use, by abuse of notation, $F_0$ and $F_{0,\lambda}$  synonymously. We trust that this will cause no problem to the reader.

The following observation is one of the cornerstones of the loop group method.
For notation we refer to the appendix.

\begin{lemma}
{The extended frame}  $F_0(z, \bar z, \lambda) $ 
 {is contained in the real twisted loop subgroup}  $\Lambda  {G}_{\sigma_0}$
{of}  $ \Lambda  {G}_{\sigma_0}^\C.$
\end{lemma}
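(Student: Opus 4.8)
The plan is to verify the twisting condition directly from the ODE that defines $F_{0,\lambda}$. Recall that $\Lambda G_{\sigma_0}$ is cut out inside $\Lambda G^{\C}_{\sigma_0}$ by two requirements: first, the reality condition that $F_0(z,\bar z,\lambda) \in G$ whenever $\lambda \in S^1$; second, the twisting condition $\sigma_0\bigl(F_0(z,\bar z,\lambda)\bigr) = F_0(z,\bar z,-\lambda)$, where $\sigma_0$ is here extended to an automorphism of $\Lambda G^{\C}$ by acting pointwise in $\lambda$. (More precisely, on the Lie algebra level the $\lambda$-dependence is organized so that $\sigma_0$ acts on the $\lambda^k$-component as $(-1)^k$ times the pointwise $\sigma_0$, which is exactly the content of the twisted loop group.) So the statement amounts to two separate verifications.

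First I would dispose of the reality condition. By construction, $\alpha_{0,\lambda} = \lambda^{-1}\alpha_0' + \alpha_{0,k} + \lambda\,\alpha_0''$, and for $\lambda \in S^1$ we have $\bar\lambda = \lambda^{-1}$; since $\alpha_0''$ is the complex conjugate of $\alpha_0'$ (both lie in $\mathfrak{p}_0$, the real form) and $\alpha_{0,k}$ is real, the one-form $\alpha_{0,\lambda}$ takes values in the real Lie algebra $\mathfrak{g}$ for each $\lambda \in S^1$. Hence the solution of $dF_{0,\lambda} = F_{0,\lambda}\alpha_{0,\lambda}$ with initial condition $F_{0,\lambda}(z_0,\bar z_0) = e$ stays in $G$ for $\lambda \in S^1$, by uniqueness of solutions to the linear ODE. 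This is the routine half.

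The main work is the twisting condition. I would apply the automorphism $\sigma_0$ (extended to loops by $\lambda \mapsto -\lambda$, i.e. $(\hat\sigma_0 A)(\lambda) := \sigma_0(A(-\lambda))$) to the defining ODE. Because $\sigma_0$ fixes $\mathfrak{k}_0$ pointwise and acts as $-1$ on $\mathfrak{p}_0$, one computes
\begin{equation*}
\hat\sigma_0\bigl(\alpha_{0,\lambda}\bigr) = \sigma_0\bigl(\alpha_{0,-\lambda}\bigr)
= \sigma_0\bigl((-\lambda)^{-1}\alpha_0' + \alpha_{0,k} + (-\lambda)\alpha_0''\bigr)
= \lambda^{-1}\alpha_0' + \alpha_{0,k} + \lambda\,\alpha_0'' = \alpha_{0,\lambda},
\end{equation*}
so $\alpha_{0,\lambda}$ is invariant under the twisted action. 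Setting $G_\lambda := \hat\sigma_0(F_{0,\lambda})$, one checks that $G_\lambda$ solves the same ODE $dG_\lambda = G_\lambda\,\alpha_{0,\lambda}$ with the same initial value $G_\lambda(z_0,\bar z_0) = \sigma_0(e) = e$; by uniqueness $G_\lambda = F_{0,\lambda}$, which is exactly the twisting condition. Combined with the reality statement, this places $F_{0,\lambda}$ in $\Lambda G_{\sigma_0}$.

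The one point requiring care — and the only place I anticipate friction — is the bookkeeping of how $\sigma_0$ interacts with the loop parameter and with the fact that $F_{0,\lambda}$ is a priori only a formal or $\C^*$-convergent loop rather than a genuine element of the Banach loop group; one must make sure the relevant completion of $\Lambda G^{\C}_{\sigma_0}$ is the one in which the ODE was solved in the preceding lemma, and that $\hat\sigma_0$ is continuous for that topology so that the uniqueness argument genuinely applies. Once the functional-analytic setup is pinned down (or, as the paper suggests, deferred to the standard references), the algebraic heart of the argument is the short invariance computation above.
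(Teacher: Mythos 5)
Your argument is correct and is exactly the standard DPW verification: the paper itself states this lemma without proof, deferring to \cite{DPW}, and the intended justification is precisely your computation that $\alpha_{0,\lambda}$ is fixed by the twisted involution and real along $S^1$, so that ODE uniqueness with initial value $e$ at $z_0$ forces $\sigma_0\bigl(F_0(z,\bar z,-\lambda)\bigr)=F_0(z,\bar z,\lambda)$ and $F_0(\cdot,\cdot,\lambda)\in G$ for $\lambda\in S^1$. Only a cosmetic point: $\alpha_0'$ and $\alpha_0''$ individually take values in $\mathfrak{p}_0^{\C}$ (not $\mathfrak{p}_0$); what you actually use, correctly, is that they are conjugate to each other with respect to the real form, so that $\lambda^{-1}\alpha_0'+\lambda\alpha_0''$ is $\mathfrak{p}_0$-valued for $|\lambda|=1$.
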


Finally, we introduce  the associated family of harmonic maps $f_{0,\lambda}$  associated to $f$ relative to $z_0$.

\begin{definition} \label{associative family}
Let $f, f_0$ and $F_{0,\lambda}$ be as above.Then we put for $\lambda \in \C^*$
$$ f_{0,\lambda} = F_{0,\lambda} mod K_0.$$
\end{definition}

We can state now (see \cite{DPW}):

\begin{proposition}
Attaining the assumptions and definitions made above we have:

$(1)$  The family of maps $ f_{0,\lambda}$ consists, for $\lambda \in S^1,$  of harmonic maps 
from $\D$ into the inner symmetric space $G/{K_0}$.

$(2)$ For any fixed $\mu \in S^1$ the extended frame $ F_{0,\lambda} , \lambda \in S^1,$
is the extended frame of the harmonic map  $ f_{0,\mu}.$
In particular,  for all $\mu \in S^1$ the frame $ F_{0,\mu}$ is a frame of the harmonic map $ f_{0,\mu}$  satisfying  
$ F_{0,\mu}(z_0, \overline{z_0}) = e.$

The family of harmonic maps$ f_{0,\lambda}, \lambda \in S^1$  is called  {\bf the associated family of harmonic maps $f_{0,\lambda}$  associated to $f$ relative to $z_0$}.
\end{proposition}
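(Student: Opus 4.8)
The plan is to deduce both assertions from two ingredients already in hand: the integrability of the loopified Maurer--Cartan form $\alpha_{0,\lambda}$ for all $\lambda\in\C^{*}$ (the preceding Lemma), and the DPW characterization recalled above, namely that a map into an inner symmetric space $G/K_{0}$ is harmonic if and only if the loopification of the Maurer--Cartan form of one — equivalently any — of its frames is integrable for all values of the loop parameter. The only extra tool needed is uniqueness of solutions of the linear ODE $dF=F\,\alpha_{0,\lambda}$ with a prescribed value at $z_{0}$, which also gives uniqueness of the extended frame normalized at $z_{0}$.

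First I would treat $(1)$. Fix $\mu\in S^{1}$. By the Lemma on the twisted loop group, $F_{0,\mu}$ takes values in $G$, so $f_{0,\mu}=F_{0,\mu}\bmod K_{0}$ is a genuine smooth map $\D\to G/K_{0}$ and $F_{0,\mu}$ is one of its frames, by Definition \ref{associative family}. Its Maurer--Cartan form is $F_{0,\mu}^{-1}dF_{0,\mu}=\alpha_{0,\mu}=\mu^{-1}\alpha_{0}'+\alpha_{0,k}+\mu\,\alpha_{0}''$. Since $\mu^{\pm1}$ are constant scalars, this \emph{is} the decomposition of $F_{0,\mu}^{-1}dF_{0,\mu}$ into its $\mathfrak{p}_{0}$-valued $(1,0)$-part $\mu^{-1}\alpha_{0}'$, its $\mathfrak{k}_{0}$-valued part $\alpha_{0,k}$, and its $\mathfrak{p}_{0}$-valued $(0,1)$-part $\mu\,\alpha_{0}''$. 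Re-loopifying with a fresh parameter $\nu$ then produces $\nu^{-1}\mu^{-1}\alpha_{0}'+\alpha_{0,k}+\nu\mu\,\alpha_{0}''=\alpha_{0,\nu\mu}$, which by the Lemma is integrable for every $\nu\in\C^{*}$ (apply ``$\alpha_{0,\tau}$ integrable for all $\tau$'' with $\tau=\nu\mu$). By the harmonicity criterion, $f_{0,\mu}$ is harmonic, and since $\mu\in S^{1}$ was arbitrary this proves $(1)$.

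For $(2)$, fix $\mu\in S^{1}$ once more. From the computation in $(1)$, $F_{0,\mu}$ is a frame of the harmonic map $f_{0,\mu}$ with $F_{0,\mu}(z_{0},\overline{z_{0}})=e$, and the corresponding loopified Maurer--Cartan form, in a parameter $\nu$, is $\alpha_{0,\nu\mu}$. Hence the extended frame of $f_{0,\mu}$ is the unique solution $G_{\nu}$ of $dG_{\nu}=G_{\nu}\,\alpha_{0,\nu\mu}$ with $G_{\nu}(z_{0},\overline{z_{0}})=e$; but $\nu\mapsto F_{0,\nu\mu}$ already solves this system, so by uniqueness the extended frame of $f_{0,\mu}$ equals $\nu\mapsto F_{0,\nu\mu}$. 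As $\nu$ ranges over $S^{1}$ so does $\nu\mu$, so this is exactly the family $F_{0,\lambda}$, $\lambda\in S^{1}$, after reparametrization; it stays in the real twisted loop group because the substitution $\lambda\mapsto\nu\mu$ with $\mu\in S^{1}$ respects both the reality and the $\sigma_{0}$-twisting conditions, and at $\nu=1$ it gives that $F_{0,\mu}$ itself is a frame of $f_{0,\mu}$ with value $e$ at $z_{0}$. The routine part is the Lie-algebra type bookkeeping; the point that genuinely needs care — and which I regard as the crux — is verifying that freezing $\lambda=\mu$ and then re-loopifying reproduces $\alpha_{0,\nu\mu}$ exactly, i.e.\ that $\mu^{-1}\alpha_{0}'$ is the \emph{entire} $\mathfrak{p}_{0}$-valued $(1,0)$-component of $F_{0,\mu}^{-1}dF_{0,\mu}$ (and similarly for the remaining two summands), with no leakage between the $\mathfrak{k}_{0}$-part and the $\mathfrak{p}_{0}$-parts. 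Once this is settled, both statements follow from uniqueness for the defining ODE together with the harmonicity criterion quoted from \cite{DPW}.
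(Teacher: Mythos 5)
Your argument is correct and is essentially the standard one: the paper offers no independent proof but simply invokes \cite{DPW}, and your reasoning (freezing $\lambda=\mu$, observing that the $\mathfrak{k}_{0}/\mathfrak{p}_{0}$ and type decomposition of $F_{0,\mu}^{-1}dF_{0,\mu}$ is $\mu^{-1}\alpha_{0}'+\alpha_{0,k}+\mu\,\alpha_{0}''$, re-loopifying to get $\alpha_{0,\nu\mu}$, and concluding by the integrability criterion plus ODE uniqueness) is precisely the argument of that reference. No gaps.
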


\subsection{The loop group method}

\subsubsection{From harmonic maps to normalized potentials}

We will consider $f, f_0, F_0$ etc. as above.

So far we have constructed $F_{0,\lambda}$ and $f_{0,\lambda}$.

The heart of the loop group method is to implement the following flow chart (and its converse):

$$ f_{0,\lambda} \rightsquigarrow  F_{0,\lambda}   \rightsquigarrow F_{0,-} \rightsquigarrow  \eta_{0,-}  $$

The first arrow follows from the proposition just above.

The second arrow refers to the (meromorphic) ``normalized extended frame"  $F_-(z, \lambda).$ 

It is obtained from the extended frame $F_0(z, \bar z, \lambda)$ by the (unique) Birkhoff splitting 
away from a discrete subset of $\D$
\begin{equation}
F_0(z, \bar z, \lambda) = F_{0,-}(z,  \lambda) L_{0,+}(z, \bar z, \lambda),
\end{equation}
where $F_{0,-}$ is chosen w.l.g. to have the form $F_{0,-}(z, \lambda) = e + \mathcal{O}(\lambda^{-1})$.

The uniqueness of the splitting implies $F_{0,-}(z_0, \lambda) = e$.

Note, by the general theory,  $F_{0,-}$ is meromorphic in $z$, whence $\bar z$ does not occur in the normalized
extended frame.

Finally, we define the normalized potential \cite{DPW} associated to $f$ relative to $z_0$.

\begin{definition}
The normalized potential  $\eta_{0,-}$ of the harmonic map $f$  relative to the base point $z_0$ is the 
Maurer-Cartan form of the normalized extended frame  $F_{0,-}(z, \lambda)$:

$$  \eta_{0,-}(z, \lambda) = F_{0,-}(z, \lambda)^{-1} d F_{0,-}(z, \lambda).$$
\end{definition}

It is  easy to see that $  \eta_{0,-}(z, \lambda) $ is of the form

$$   \eta_{0,-}(z, \lambda) = \lambda^{-1}  \xi_{0,-}(z, \lambda) dz,$$
where $\xi_{0,-}(z, \lambda)$ is meromorphic on $\D$ and contained in $\mathfrak{p}_0^\C.$ 

Also note, since our construction yields  $F_{0,-}(z_0, \lambda) = e$, the coefficient 
$\xi_{0,-}(z, \lambda)$ of the normalized potential  does not have a pole at $z_0$.

\subsubsection{From normalized potentials to harmonic maps }

This section mainly consists of reversing the steps of the last subsection.

Let $$  \hat{\eta}_{0,-}(z, \lambda) = \lambda^{-1}  \xi_{0,-}(z, \lambda) dz,$$
be a  meromorphic 1-form defined on $\D$ which takes values in   $\mathfrak{p}_0^\C$ and is finite
at $z_0.$  Then the differential equation

$$ d  \hat{F}_{0,-}(z, \lambda) =   \hat{F}_{0,-}(z, \lambda) \hat{\eta}_{0,-}(z, \lambda),   
\hat{F}_{0,-}(z_0, \lambda) = e $$

always has a holomorphic solution in a neighbourhood of $z_0$.

We will assume from here on that actually a meromorphic solution exists on all of $\D$.
We will call such one-forms ``normalized potentials". 

Next we will construct a frame  $\hat{F}_{0,\lambda} : \D \rightarrow \Lambda G_{\sigma_0}$.

\begin{proposition} [Iwasawa decomposition]
The meromorphic matrix function  $  \hat{F}_{0,-}(z, \lambda),$ defined  by assumption on $\D$, has
locally near $z_0$ a decomposition of the form

$$   \hat{F}_{0,-}(z, \lambda) =   \hat{F}_{0,}(z, \bar z, \lambda) \hat{V}_0(z, \bar z, \lambda)$$
with $\hat{F}_{0,\lambda} : \D \rightarrow \Lambda G_{\sigma_0}$ satisfying $\hat{F}_{0} (z_0, \bar{z}_0, \lambda) = e$ and 
$\hat{V}_{0,\lambda} : \D \rightarrow \Lambda^+ G^\C_{\sigma_0}.$

Here all matrix functions are defined and finite away from the poles of $ \hat{F}_{0,-}(z, \lambda)$.
\end{proposition}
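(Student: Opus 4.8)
The plan is to reduce the statement to the pointwise Iwasawa decomposition of the twisted loop group $\Lambda G^\C_{\sigma_0}$, supplemented by a regularity argument for the dependence on $(z,\bar z)$. First I would check that $\hat{F}_{0,-}(z,\lambda)$ actually takes values in $\Lambda G^\C_{\sigma_0}$ wherever it is finite. By construction $\hat{\eta}_{0,-}(z,\lambda)=\lambda^{-1}\xi_{0,-}(z,\lambda)\,dz$ with $\xi_{0,-}$ meromorphic on $\D$ and valued in $\mathfrak{p}_0^\C$; since $\sigma_0$ acts as $-1$ on $\mathfrak{p}_0^\C$ and the twisting couples the involution with $\lambda\mapsto-\lambda$, the term $\lambda^{-1}\xi_{0,-}$ is invariant under the twist, so $\hat{\eta}_{0,-}$ lies in the twisted loop algebra. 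Hence the solution $\hat{F}_{0,-}$ of the linear ODE $d\hat{F}_{0,-}=\hat{F}_{0,-}\hat{\eta}_{0,-}$, $\hat{F}_{0,-}(z_0,\lambda)=e$, which by the standing assumption is meromorphic on all of $\D$, takes values in $\Lambda G^\C_{\sigma_0}$ away from its poles (and is in fact of the form $e+\mathcal{O}(\lambda^{-1})$, as in the Birkhoff discussion of the preceding subsection).

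Next I would invoke the Iwasawa decomposition of $\Lambda G^\C_{\sigma_0}$: there is an open neighbourhood of the identity, the \emph{big Iwasawa cell}, on which every element $g$ factors uniquely as $g=F\cdot V$ with $F\in\Lambda G_{\sigma_0}$ and $V\in\Lambda^+G^\C_{\sigma_0}$, the splitting map being real-analytic. In the compact case (Pressley--Segal) this cell is all of $\Lambda G^\C_{\sigma_0}$; in the non-compact case one has only an open dense subset, but it contains $e$, which is exactly what is needed here. Since $\hat{F}_{0,-}(z_0,\lambda)=e$ lies in the big cell and $z\mapsto\hat{F}_{0,-}(z,\lambda)$ is holomorphic near $z_0$, the element $\hat{F}_{0,-}(z,\lambda)$ remains in the big cell for $z$ in a neighbourhood of $z_0$. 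Applying the splitting pointwise there produces $\hat{F}_{0,-}(z,\lambda)=\hat{F}_0(z,\bar z,\lambda)\hat{V}_0(z,\bar z,\lambda)$ with $\hat{F}_{0,\lambda}\colon\D\to\Lambda G_{\sigma_0}$ and $\hat{V}_{0,\lambda}\colon\D\to\Lambda^+G^\C_{\sigma_0}$; composing the real-analytic splitting map with the holomorphic $\hat{F}_{0,-}$ gives the asserted smooth (real-analytic) dependence on $(z,\bar z)$, and the two factors propagate meromorphically away from the poles of $\hat{F}_{0,-}$ exactly as the Birkhoff factors did above.

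The normalization is then immediate from uniqueness of the Iwasawa splitting on the big cell: at $z=z_0$ we have two factorizations $\hat{F}_{0,-}(z_0,\lambda)=e=e\cdot e$ of the identity, so $\hat{F}_0(z_0,\bar z_0,\lambda)=e$ (and likewise $\hat{V}_0(z_0,\bar z_0,\lambda)=e$), which is the stated boundary condition.

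The main obstacle is the non-compact subtlety: there the Iwasawa decomposition is not global on $\Lambda G^\C_{\sigma_0}$, so one cannot expect more than the local statement near $z_0$ that is actually claimed, and one must cite the precise version providing an open Iwasawa cell through the identity together with real-analyticity of the splitting (Kellersch's thesis, in the formulation used throughout the DPW literature \cite{DPW}). Once that input is available, the twisting check, the regularity, and the normalization are all routine.
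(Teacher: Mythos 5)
Your argument is correct and is essentially the argument the paper intends: the proposition is stated without an explicit proof and rests exactly on the appendix Iwasawa splitting theorem — openness (and density) of the cell $\Lambda G_{\sigma_0}\cdot \Lambda^{+}_{B}G^{\C}_{\sigma_0}$, which contains $e=\hat{F}_{0,-}(z_0,\lambda)$, real-analyticity of the splitting map, holomorphy of $\hat{F}_{0,-}$ near $z_0$, and the twisting check that $\lambda^{-1}\xi_{0,-}\,dz$ with $\xi_{0,-}\in\mathfrak{p}_0^{\C}$ lies in the twisted loop algebra. The only small imprecision is that uniqueness of the splitting holds with the plus-factor in $\Lambda^{+}_{B}G^{\C}_{\sigma_0}$ (leading term with positive diagonal), as the paper notes immediately after the proposition; with $\Lambda^{+}G^{\C}_{\sigma_0}$ alone the factorization is unique only up to $K_0$, but this does not affect the normalization $\hat{F}_0(z_0,\bar z_0,\lambda)=e$, which can simply be arranged.
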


The decomposition can be made unique, if one makes sure (w.l.g. this is possible) 
that the leading term of $\hat{V}_{0,\lambda}$ only has positive diagonal entries.

Finally we put

$$\hat{f}_0 (z, \bar z ,\lambda) = \hat{F}_0 (z, \bar z, \lambda) mod K_0.$$

\begin{theorem}
The family $\hat{f}_0 (z, \bar z ,\lambda)$ is a family of harmonic maps defined on $\D$ and 
takes values in $G/K$.

Furthermore, the two procedures outlined in this subsection and in the previous one are inverse to each other.
In particular, this gives a  1-1-relation between harmonic maps from $\D$ into $G/{K_0}$ and the space of
 normalized potentials for any basepoint $z_0 \in \D.$ 
\end{theorem}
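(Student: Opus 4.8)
\medskip

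\noindent\textbf{Proof strategy.}
This is the analogue, in the present slightly generalized framework, of the fundamental correspondence of \cite{DPW}, and I would prove it along the same lines, paying attention to the two places where the inner/semisimple standing hypotheses enter.

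The first task is to show that $\hat f_0(z,\bar z,\lambda)=\hat F_0(z,\bar z,\lambda)\bmod K_0$ is a family of harmonic maps with extended frame $\hat F_0$. Starting from the Iwasawa splitting $\hat F_{0,-}=\hat F_0\,\hat V_0$ I would compute
\[
\omega:=\hat F_0^{-1}\,d\hat F_0=\mathrm{Ad}(\hat V_0)\bigl(\lambda^{-1}\xi_{0,-}\,dz\bigr)+\hat V_0\,d\bigl(\hat V_0^{-1}\bigr).
\]
Since $\hat V_{0,\lambda}$ takes values in $\Lambda^+G^\C_{\sigma_0}$, both summands extend holomorphically in $\lambda$ across $\lambda=0$ apart from the explicit simple pole in the first one; hence the Laurent expansion of $\omega$ in $\lambda$ has no terms of order $<-1$, and its $\lambda^{-1}$-coefficient, being untouched by the second summand, is of type $(1,0)$. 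Then I would feed in that $\hat F_{0,\lambda}$ lies in the real twisted loop group $\Lambda G_{\sigma_0}$: the twisting condition puts the even Laurent coefficients of $\omega$ into $\mathfrak{k}_0^\C$ and the odd ones into $\mathfrak{p}_0^\C$, while the reality condition forces $\tau(\omega_k)=\omega_{-k}$ for the conjugation $\tau$ defining $\mathfrak{g}\subset\mathfrak{g}^\C$; combined with the vanishing for $k\le-2$ this gives $\omega_k=0$ for $k\ge 2$. Hence
\[
\omega=\lambda^{-1}\hat\alpha_0'+\hat\alpha_{0,k}+\lambda\,\hat\alpha_0'',
\]
with $\hat\alpha_0'\in\mathfrak{p}_0^\C$ of type $(1,0)$, $\hat\alpha_0''=\tau(\hat\alpha_0')$ of type $(0,1)$ and $\hat\alpha_{0,k}\in\mathfrak{k}_0^\C$ --- exactly the shape of a loopified Maurer-Cartan form. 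Since $\omega$ is automatically integrable for every $\lambda\in\C^*$, the characterization recalled above (harmonicity of $\hat f_0(\cdot,\lambda)$ is equivalent to integrability of its loopified Maurer-Cartan form for all $\lambda$) shows that each $\hat f_0(\cdot,\lambda)$ is harmonic, that $\hat F_0$ is its extended frame, and that $\hat F_0(z_0,\bar z_0,\lambda)=e$.

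For the second task --- that the two procedures invert each other --- I would argue by uniqueness of the factorizations involved. Going \emph{potential $\to$ harmonic $\to$ potential}: the identity $\hat F_0=\hat F_{0,-}\,\hat V_0^{-1}$, with $\hat F_{0,-}=e+\mathcal{O}(\lambda^{-1})$ and $\hat V_0^{-1}\in\Lambda^+G^\C_{\sigma_0}$, is the normalized Birkhoff splitting of the extended frame $\hat F_0$; so by uniqueness of that splitting the normalized extended frame attached to $\hat f_0$ is $\hat F_{0,-}$ again, and its Maurer-Cartan form is the potential one started from. Going \emph{harmonic $\to$ potential $\to$ harmonic}: writing $F_0=F_{0,-}L_{0,+}$ one has $F_{0,-}(z_0,\lambda)=e$ and $\eta_{0,-}=F_{0,-}^{-1}dF_{0,-}$; running the converse, the solution of $d\hat F_{0,-}=\hat F_{0,-}\eta_{0,-}$ with $\hat F_{0,-}(z_0,\lambda)=e$ is $F_{0,-}$ itself, and then $F_{0,-}=F_0\,L_{0,+}^{-1}$ exhibits an Iwasawa splitting of $F_{0,-}$; uniqueness of the normalized Iwasawa decomposition together with the normalization at $z_0$ forces $\hat F_0=F_0$, hence $\hat f_0=f_0$. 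This yields the asserted $1$-$1$ correspondence.

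The one genuinely non-formal step is the passage, in the first paragraph, from ``$\omega$ is bounded below by $\lambda^{-1}$'' to ``$\omega$ is supported in $\{\lambda^{-1},\lambda^0,\lambda\}$'': that is precisely where membership of $\hat F_0$ in the \emph{real} twisted loop group is used, and in the non-compact case it also requires the correct form of the reality condition on $\Lambda G_{\sigma_0}$, together with the fact --- which holds because $\mathcal{S}$ is inner and semisimple --- that the Iwasawa decomposition $\Lambda G_{\sigma_0}\cdot\Lambda^+G^\C_{\sigma_0}\subset\Lambda G^\C_{\sigma_0}$ is available on the relevant connected component, with the stated uniqueness after fixing the leading diagonal term. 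Granting the structure-theoretic input underlying \cite{DPW} and its non-compact refinements, the remaining verifications amount to the bookkeeping indicated above.
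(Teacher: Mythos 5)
Your argument is correct and is essentially the standard DPW proof that the paper itself relies on (the theorem is stated without proof, with reference to \cite{DPW}): the Laurent-shape computation for $\hat F_0^{-1}d\hat F_0$ via the Iwasawa factor, the reality/twisting argument cutting the expansion to $\{\lambda^{-1},\lambda^0,\lambda\}$, and the mutual inversion via uniqueness of the normalized Birkhoff and Iwasawa splittings. No substantive gap; your closing caveat about the Iwasawa decomposition being available only on the big cell (so the construction is valid away from poles and off the splitting's boundary set) is exactly the point the paper also glosses over.
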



\section{\large{Relating the construction relative to $z_0$ to the construction relative to $z_1$}}

In this section we retain the setting of the last section, but consider in addition the loop group method relative to another basepoint $z_1 \in \D, z_0 \neq z_1$.

Note, that the group $G$ has been chosen a priori together with the symmetric space $\mathcal{S}.$

Clearly, we can apply now, what was said in the previous subsections of this section about the loop group method relative to $z_0,$ verbatim to  the loop group method relative to $z_1$.

Notationally , we  will indicate by using the subscript $1$ in place of $0$ that the quantity under consideration 
belongs to the loop group procedure relative to $z_1$.
We trust that the reader will have no problems to adjust for this change of label.

\subsection{\large{A comparison of the geometric settings associated to $z_0$ and to $z_1$}}

As above, we consider  the stabilizer of $f(z_1)$ in $G$ and denote it by $K(z_1) = K_1.$
Then we have the analogous diagram

\[
\begin{tikzcd}[column sep=4em,row sep=4em]
G  \ar{r}{\beta_1}  
    \ar{d}[swap]{ \pi_1}   &   \mathcal{S}     \\
G/{K_1}   \ar{ru}{j_1} 
\end{tikzcd}
\]

 and obtain a natural identification 
$G/{K_1} \rightarrow \mathcal{S}$ via $j_1$.

To relate the two loop group procedures we choose  some $h \in G$ satisfying

\begin{equation} \label{zo->z1}
h.f(z_0) = f(z_1).
\end{equation}

Note that here "." denotes the action of $G$ on $G/K$ given by: 
$g.(wK) = (gw)K.$

\begin{remark}
For later purposes we would like to point out that $h$ is uniquely determined up
 to multiplication on the right by an element of $K_0$ and up to multiplication on the left 
 by an element of $K_1.$
 \end{remark}

We observe that the isotropy group $K_1$ of $f(z_1)$ satisfies

\begin{equation} \label{isostabi}
K_1 = h K_0 h^{-1}.
\end{equation}
 Indeed, $b.f(z_1) = f(z_1) \Longleftrightarrow bh.f(z_0) = h.f(z_0) \Longleftrightarrow
 h^{-1} b h \in K_0.$

 For the involution ${\sigma_1}$ of $G$ defining the symmetric space $\mathcal{S} \cong G/K_1$ we obtain
 
 \begin{equation} \label{trafosigma}
 {\sigma_1} = Ad(h){ \sigma_0} Ad(h)^{-1},
 \end{equation}
 where $Ad(g)$ denotes the adjoint representation of $g \in G$.
 
 Furthermore we have (in the sense of \cite{Berger}, section 7):
 
 \begin{proposition} \label{isoquotients}
In view of   (\ref{isostabi}), the isomorphism of Lie groups, $Ad(h):
 G \rightarrow G, g \mapsto h g h^{-1},$ induces the isomorphism
of symmetric spaces
$$\varphi : G/K_0 \rightarrow   G/ K_1, gK_0 \mapsto  h g h^{-1} K_1= h (gK_0) h^{-1}.$$
\end{proposition}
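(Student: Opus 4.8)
The plan is to verify that the claimed map $\varphi$ is well-defined and then invoke the general correspondence between Lie-group automorphisms compatible with the symmetric-space involutions and isomorphisms of the associated quotient symmetric spaces (the framework of \cite{Berger}, section 7). First I would check that $\varphi$ is well-defined: if $gK_0 = g'K_0$, then $g' = gk$ for some $k \in K_0$, so $hg'h^{-1} = (hgh^{-1})(hkh^{-1})$, and by \eqref{isostabi} we have $hkh^{-1} \in hK_0h^{-1} = K_1$; hence $hg'h^{-1}K_1 = hgh^{-1}K_1$, so the value $\varphi(gK_0)$ is independent of the representative. The same computation shows $\varphi$ is a bijection, with inverse $g K_1 \mapsto h^{-1}gh K_0$, again well-defined by \eqref{isostabi} (applied with $h^{-1}$). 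Smoothness of $\varphi$ and of its inverse is immediate, since both are induced by the smooth group automorphisms $\mathrm{Ad}(h)^{\pm 1}$ on the numerator, and the quotient projections are submersions.

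Next I would confirm that $\varphi$ respects the symmetric-space structures, i.e. that it intertwines the symmetry maps. The symmetry at $eK_0$ in $G/K_0$ is induced by $\sigma_0$, and the symmetry at $eK_1$ in $G/K_1$ by $\sigma_1$; relation \eqref{trafosigma}, $\sigma_1 = \mathrm{Ad}(h)\,\sigma_0\,\mathrm{Ad}(h)^{-1}$, is precisely the statement that $\mathrm{Ad}(h)$ carries the pair $(G,\sigma_0)$ to the pair $(G,\sigma_1)$ as symmetric pairs, so that the induced map on quotients is an isomorphism of symmetric spaces. Concretely, one checks that $\varphi$ sends the base point $eK_0$ to $h K_1 = \varphi(eK_0)$ and, on tangent spaces, identifies $\mathfrak{p}_0$ with $\mathfrak{p}_1 = \mathrm{Ad}(h)\mathfrak{p}_0$ — this last identity follows from \eqref{trafosigma} since $x \in \mathfrak{p}_0 \iff \sigma_0(x) = -x \iff \sigma_1(\mathrm{Ad}(h)x) = -\mathrm{Ad}(h)x$. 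Equivariance with respect to the $G$-actions is the displayed formula $\varphi(g.(wK_0)) = \varphi((gw)K_0) = h g w h^{-1} K_1 = (hgh^{-1}).(hwh^{-1}K_1) = \mathrm{Ad}(h)(g).\varphi(wK_0)$, so $\varphi$ intertwines the $G$-action on the source with the $G$-action on the target twisted by $\mathrm{Ad}(h)$; since $\mathrm{Ad}(h)$ is an automorphism of $G$, this is exactly the compatibility required of a symmetric-space isomorphism in the sense of \cite{Berger}.

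I expect the only real subtlety to be bookkeeping rather than mathematics: one must be careful that "isomorphism of symmetric spaces" is understood in the appropriate sense (an isometry, or a diffeomorphism intertwining the symmetries, up to the automorphism $\mathrm{Ad}(h)$ of the transvection group), since $\varphi$ is not $G$-equivariant for the given actions but only for the actions related by $\mathrm{Ad}(h)$. Once the conventions of \cite{Berger}, section 7 are pinned down, the verification reduces to the three routine checks above — well-definedness via \eqref{isostabi}, intertwining the symmetries via \eqref{trafosigma}, and the twisted-equivariance identity — and no step requires more than elementary manipulation of cosets and the adjoint action.
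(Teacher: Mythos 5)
Your argument is correct and follows the same route the paper takes implicitly: the proposition is stated there without a separate proof, resting exactly on the two facts you verify, namely $K_1 = hK_0h^{-1}$ from (\ref{isostabi}) for well-definedness and bijectivity, and $\sigma_1 = \mathrm{Ad}(h)\,\sigma_0\,\mathrm{Ad}(h)^{-1}$ from (\ref{trafosigma}) for compatibility with the symmetric-space structures in the sense of Berger, together with the twisted equivariance under $\mathrm{Ad}(h)$. One small slip: $\varphi(eK_0) = h e h^{-1}K_1 = eK_1$, not $hK_1$ (which is in general a different coset since $h \notin K_1$); the corrected value only makes your base-point check cleaner, and nothing else in your argument depends on it.
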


 \subsection{A comparison of the loop group methods associated to $z_0$ and to $z_1$}

 We start by stating the ${\sigma_1}-$twisted loop group (which is associated to $f(z_1)$ and $\sigma_1$):

 \begin{equation}
 \Lambda G^{\C}_{{\sigma_1}} = \{ g(\lambda) \in  \Lambda G^{\C}; 
 {\sigma_1}(g)(\lambda) = {\sigma_1}(g(-\lambda)) = g(\lambda) \}.
 \end{equation}

 For the two twisted  loop groups associated to the two base points we infer the relation:

 \begin{lemma} \label{relation of loop groups}
  $\Lambda G^{\C}_{{\sigma_1}} = h \cdot  \Lambda G^{\C}_{\sigma_0} \cdot  h^{-1}.$
 \end{lemma}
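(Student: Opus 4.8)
The plan is to verify the claimed equality $\Lambda G^{\C}_{\sigma_1} = h \cdot \Lambda G^{\C}_{\sigma_0} \cdot h^{-1}$ by a direct double-inclusion argument, using the transformation rule $\sigma_1 = Ad(h) \circ \sigma_0 \circ Ad(h)^{-1}$ from \eqref{trafosigma}. Since $Ad(h)$ acts on constant group elements simply as conjugation $g \mapsto hgh^{-1}$, and since $h$ is a constant ($\lambda$-independent) element of $G$, conjugation by $h$ commutes with the operation $\lambda \mapsto -\lambda$ on loops. This last observation is the only substantive point, and it is immediate: if $g \in \Lambda G^{\C}$ then $(hgh^{-1})(-\lambda) = h\,g(-\lambda)\,h^{-1}$.

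The argument proceeds as follows. First I would take an arbitrary element $\tilde g(\lambda) \in h \cdot \Lambda G^{\C}_{\sigma_0} \cdot h^{-1}$, so $\tilde g(\lambda) = h\,g(\lambda)\,h^{-1}$ for some $g \in \Lambda G^{\C}_{\sigma_0}$, and check the twisting condition for $\sigma_1$: using \eqref{trafosigma},
$$\sigma_1(\tilde g(-\lambda)) = Ad(h)\,\sigma_0\,Ad(h)^{-1}\bigl(h\,g(-\lambda)\,h^{-1}\bigr) = h\,\sigma_0\bigl(g(-\lambda)\bigr)\,h^{-1} = h\,g(\lambda)\,h^{-1} = \tilde g(\lambda),$$
where the third equality uses that $g \in \Lambda G^{\C}_{\sigma_0}$. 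This shows $\tilde g \in \Lambda G^{\C}_{\sigma_1}$, hence $h \cdot \Lambda G^{\C}_{\sigma_0} \cdot h^{-1} \subseteq \Lambda G^{\C}_{\sigma_1}$. For the reverse inclusion, I would run the identical computation with the roles of $\sigma_0$ and $\sigma_1$ interchanged, noting from \eqref{trafosigma} that $\sigma_0 = Ad(h)^{-1}\sigma_1\,Ad(h) = Ad(h^{-1})\,\sigma_1\,Ad(h^{-1})^{-1}$, which gives $h^{-1} \cdot \Lambda G^{\C}_{\sigma_1} \cdot h \subseteq \Lambda G^{\C}_{\sigma_0}$, i.e. $\Lambda G^{\C}_{\sigma_1} \subseteq h \cdot \Lambda G^{\C}_{\sigma_0} \cdot h^{-1}$. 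The two inclusions give equality.

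There is essentially no obstacle here; the statement is a formal consequence of the conjugation formula \eqref{trafosigma} for the involutions together with the triviality of the $\lambda$-action on the constant $h$. The only thing worth a word of care is making sure that conjugation by $h$ preserves the regularity/loop condition defining $\Lambda G^{\C}$ itself (smoothness or real-analyticity in $\lambda$, values in $G^{\C}$) — this is clear since $h \in G \subseteq G^{\C}$ is a fixed element and left/right translation by a fixed element is a diffeomorphism of $\Lambda G^{\C}$. One could also phrase the whole proof more slickly by observing that the map $\Phi_h : \Lambda G^{\C} \to \Lambda G^{\C}$, $g \mapsto h g h^{-1}$, is a group automorphism intertwining the $\sigma_0$-twisting with the $\sigma_1$-twisting, so it restricts to an isomorphism of the fixed-point subgroups; I would include at least the explicit computation above so the reader sees where \eqref{trafosigma} enters.
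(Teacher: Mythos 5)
Your proof is correct and is essentially the same argument as the paper's: both rest on the conjugation formula $\sigma_1 = Ad(h)\,\sigma_0\,Ad(h)^{-1}$ from \eqref{trafosigma} together with the fact that conjugation by the constant element $h$ commutes with $\lambda \mapsto -\lambda$. The paper merely packages the computation as a single chain of equivalences ($g \in \Lambda G^{\C}_{\sigma_1} \Longleftrightarrow Ad(h)^{-1}g \in \Lambda G^{\C}_{\sigma_0}$) rather than as two separate inclusions, which is only a cosmetic difference.
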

 \begin{proof}
 $g \in    \Lambda G^{\C}_{{\sigma_1}} \Longleftrightarrow Ad(h) \sigma_0 Ad(h)^{-1} g(-\lambda) = g(\lambda)
  \Longleftrightarrow  \sigma_0 (Ad(h)^{-1} g(-\lambda) ) = Ad(h)^{-1} g(\lambda)  \Longleftrightarrow
   Ad(h)^{-1} g(\lambda) \in \Lambda G^{\C}_{\sigma_0}   \Longleftrightarrow   
   \Lambda G^{\C}_{{\sigma_1}} = h   \Lambda G^{\C}_{\sigma_0} h^{-1}.$
 \end{proof}
 
 Note that an analogous result holds for the groups   $\Lambda^{\pm} G^{\C}_{{\sigma_1}}$ and 
 $  \Lambda G_{{\sigma_1}}$.
 \vspace{2mm}

Next we consider  again the harmonic map $f_0:\D \rightarrow G/{K_0}$ discussed in section \ref{sect:1}
and define 

\begin{equation} \label{definehatf}
f_1: \D \rightarrow  G/K_1, \hspace{2mm}\mbox{by}\hspace{2mm} f_1 = {\varphi} \circ f_0.
\end{equation}

Note that $f_1$ is again harmonic, since ${\varphi}$ is an isometry relative to the non-degenerate forms 
induced by the Killing form of $G$.

The general theory tells us  that there exists a frame ${F_1}$ for the  harmonic map 
$f_1: \D \rightarrow G/K_1.$ More precisely we have
 
 \begin{lemma}
\mbox{There exists a frame} \hspace{2mm}$ {F_1}: \D \rightarrow G$
 \hspace{2mm} \mbox{for the harmonic map} \hspace{2mm} $ {f_1}: \D \rightarrow G/ K_1$
 
  \hspace{2mm} \mbox{such that} \hspace{2mm}
${f_1}(z, \bar z) = {F_1}(z, \bar z).K_1$  \hspace{2mm} \mbox{and } \hspace{2mm} 
${F_1}(z_1, \overline{z_1}) = e.$
\end{lemma}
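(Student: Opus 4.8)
The plan is to construct the frame $F_1$ explicitly from the already-available frame $F_0$ of $f_0$ by transporting everything through the isomorphism $\varphi$, and then to correct the value at $z_1$ by a constant right-multiplication. Recall from Step 1 of Section~\ref{sect:1} that we have a frame $F_0: \D \rightarrow G$ with $F_0(z, \bar z).eK_0 = f_0(z, \bar z)$ and $F_0(z_0, \bar z_0) = e$. Since $\varphi: G/K_0 \rightarrow G/K_1$ is given by $gK_0 \mapsto hgh^{-1}K_1$, and $f_1 = \varphi \circ f_0$ by \eqref{definehatf}, the natural candidate for a frame of $f_1$ is the map $g \mapsto hgh^{-1}$ applied to $F_0$. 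Concretely, I would first observe that
$$
\varphi\bigl(F_0(z,\bar z).eK_0\bigr) = h F_0(z,\bar z) h^{-1} K_1,
$$
so that $\widetilde F_1 := h F_0 h^{-1}$ satisfies $\widetilde F_1(z,\bar z).eK_1 = \varphi(f_0(z,\bar z)) = f_1(z,\bar z)$; hence $\widetilde F_1$ is already a frame for $f_1$, taking values in $G$ (it is a conjugate of a $G$-valued map, and $h \in G$).

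The remaining issue is the normalization at the base point. We have $\widetilde F_1(z_1,\bar z_1) = h F_0(z_1,\bar z_1) h^{-1}$, which is generally not $e$. Write $k_0 := F_0(z_1,\bar z_1) \in G$; since $F_0$ frames $f_0$ we have $k_0.eK_0 = f_0(z_1,\bar z_1)$, and because $f_1(z_1,\bar z_1) = eK_1$ (this follows from $f_1 = \varphi \circ f_0$ together with the commutativity of the defining diagrams, or directly from $h.f(z_0) = f(z_1)$), one checks $\widetilde F_1(z_1,\bar z_1) \in K_1$. I would then simply set
$$
F_1(z,\bar z) := \widetilde F_1(z,\bar z)\,\widetilde F_1(z_1,\bar z_1)^{-1} = h F_0(z,\bar z) F_0(z_1,\bar z_1)^{-1} h^{-1}.
$$
Right-multiplication by the constant element $\widetilde F_1(z_1,\bar z_1)^{-1} \in K_1$ does not change the induced map into $G/K_1$, so $F_1$ still frames $f_1$; and by construction $F_1(z_1,\bar z_1) = e$. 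This gives a frame with both required properties.

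The main obstacle — really the only non-formal point — is verifying that $\widetilde F_1(z_1,\bar z_1) = h F_0(z_1,\bar z_1) h^{-1}$ actually lies in $K_1$, i.e. that the element by which we correct is an admissible (isotropy) right factor. This requires pinning down that $f_1(z_1,\bar z_1) = eK_1$, which in turn rests on the choice \eqref{zo->z1} of $h$ with $h.f(z_0) = f(z_1)$ and on the identification $f_1 = \varphi \circ f_0$: one computes $f_1(z_1,\bar z_1) = \varphi(f_0(z_1,\bar z_1)) = h f_0(z_1,\bar z_1) h^{-1}$ in $G/K_1$, and since $f_0(z_1,\bar z_1)$ corresponds under $j_0$ to $f(z_1)$ while $h^{-1}$ conjugates $f(z_1)$ back to $f(z_0) = eK_0$, one obtains $eK_1$. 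Once this is in hand, the relation $K_1 = hK_0h^{-1}$ from \eqref{isostabi} shows $k_0 \in K_0 \Rightarrow hk_0h^{-1} \in K_1$, closing the argument. Everything else — that conjugation by $h$ maps $G$ to $G$, that the Maurer–Cartan form transforms correctly, that right translation by a constant preserves the property of being a frame — is routine and I would state it without detailed computation.
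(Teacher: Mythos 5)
Your construction stands or falls with the claim you yourself single out as the only non-formal point, namely that $\widetilde F_1(z_1,\bar z_1)=hF_0(z_1,\bar z_1)h^{-1}$ lies in $K_1$, equivalently that $f_1(z_1,\bar z_1)=eK_1$. With the paper's definitions this is false in general. Since $j_0(gK_0)=g.f(z_0)$ and $h.f(z_0)=f(z_1)$ by \eqref{zo->z1}, one has $f_0(z_1,\bar z_1)=j_0^{-1}(f(z_1))=hK_0$, and because $\varphi$ of Proposition \ref{isoquotients} is induced by \emph{conjugation}, $f_1(z_1,\bar z_1)=\varphi(hK_0)=h\,h\,h^{-1}K_1=hK_1$, which equals $eK_1$ only if $h\in K_1$, i.e.\ (by \eqref{isostabi}) $h\in K_0$, i.e.\ $f(z_1)=f(z_0)$. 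Your sentence ``$h^{-1}$ conjugates $f(z_1)$ back to $f(z_0)$'' conflates the left action of $h^{-1}$ with conjugation by $h$: the map that sends $f_0(z_1)=hK_0$ to $eK_1$ is $gK_0\mapsto gh^{-1}K_1$ (this is $j_1^{-1}\circ j_0$), not $\varphi:gK_0\mapsto hgh^{-1}K_1$. Concretely, $F_0(z_1,\bar z_1)\in hK_0$, so $hF_0(z_1,\bar z_1)h^{-1}\in hK_1\not\subset K_1$; hence your correcting constant $\widetilde F_1(z_1,\bar z_1)^{-1}$ is not in $K_1$, right-multiplication by it changes the projected map, and the resulting $F_1=hF_0F_0(z_1)^{-1}h^{-1}$ no longer frames $f_1$. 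So the argument has a genuine gap precisely at its crucial step.

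For comparison: the paper gives no construction at all here; it simply invokes ``the general theory'' (existence of a lift of a map into $G/K_1$ over the contractible domain $\D$, normalized at the base point), and the subsequent Corollary \ref{trafoframe} writes $F_1=hF_0k_0h^{-1}$ with $k_0$ taking values in $K_0$ --- a formula compatible with $F_1(z_0,\bar z_0)\in K_1$ but, by the computation above, incompatible with $F_1(z_1,\bar z_1)=e$ unless $f(z_1)=f(z_0)$. In other words, the difficulty you ran into reflects a real tension between the normalization at $z_1$ in the Lemma and the definition $f_1=\varphi\circ f_0$ in \eqref{definehatf}; your attempted verification resolves it incorrectly rather than flagging it. A repaired version of your argument does work if one replaces $\varphi$ by the identification $j_1^{-1}\circ j_0:gK_0\mapsto gh^{-1}K_1$ (so that $f_1(z_1)=eK_1$, $F_0(z_1)h^{-1}\in K_1$, and $F_1:=F_0\,F_0(z_1)^{-1}$ does the job), or if one keeps $\varphi$ but normalizes at $z_0$, where $\widetilde F_1(z_0,\bar z_0)=e$ already; as written, however, the proof does not establish the statement.
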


Note, writing locally $f_0 = g_0K_0$ we obtain 
$$f_1 = \varphi \circ f_0 = h g_0K_0 h^{-1} = h g_0 h^{-1} K_1.$$

Hence, using the definitions of $F_0, {F_1} $ and  ${\varphi}$  we obtain in a straightforward way:

\begin{corollary}  \label{trafoframe}
${F_1}(z, \bar z) = h F_0(z, \bar z) k_0(z, \bar z) h^{-1} =( h F_0(z, \bar z)h^{-1}) \cdot ( h k_0(z, \bar z) h^{-1}),$

where $  h k_0(z, \bar z) h^{-1} \in K_1.$
\end{corollary}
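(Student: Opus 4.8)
The plan is to trace the definition $f_1 = \varphi \circ f_0$ through a local choice of coset representative and match it against the defining property of the frame $F_1$. First I would write $f_0(z,\bar z) = g_0(z,\bar z) K_0$ locally on a neighbourhood in $\D$, where one may take $g_0 = F_0$ itself up to a $K_0$-valued factor; more precisely, since $F_0$ is a frame of $f_0$ we have $F_0(z,\bar z) K_0 = f_0(z,\bar z)$, so $g_0 K_0 = F_0 K_0$ and hence $g_0 = F_0 k_0$ for some $K_0$-valued map $k_0(z,\bar z)$. Then applying Proposition~\ref{isoquotients}, $f_1 = \varphi \circ f_0 = \varphi(g_0 K_0) = h g_0 h^{-1} K_1 = h F_0 k_0 h^{-1} K_1$.

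Next I would observe that this exhibits $h F_0(z,\bar z) k_0(z,\bar z) h^{-1}$ as a frame of $f_1$, because it is a $G$-valued map whose $K_1$-coset is $f_1(z,\bar z)$; one checks it is smooth and satisfies the frame equations by transporting those of $F_0$ through the Lie group isomorphism $Ad(h)$ together with Lemma~\ref{relation of loop groups} (which guarantees that conjugation by $h$ maps the relevant twisted loop groups to one another, so the frame stays in the right group). At the base point $z_1$ one would like $h F_0(z_1,\bar z_1) k_0(z_1,\bar z_1) h^{-1} = h k_0(z_1,\bar z_1) h^{-1}$, using $F_0(z_0,\bar z_0)=e$ — but here one must be careful that the local representative $g_0$ and the factor $k_0$ are being evaluated at the correct point and that $h$ is only determined up to the ambiguities recorded in the Remark after \eqref{zo->z1}. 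The cleanest route is: any two frames of the same harmonic map into $G/K_1$ differ by a right $K_1$-valued factor, so $F_1$ and $h F_0 k_0 h^{-1}$ differ by such a factor; absorbing $k_0$ and this factor shows $F_1(z,\bar z) = h F_0(z,\bar z) h^{-1} \cdot (h k_0(z,\bar z) h^{-1})$ with $h k_0 h^{-1} \in h K_0 h^{-1} = K_1$ by \eqref{isostabi}. The factorization in the statement is then just the associativity rearrangement $h F_0 k_0 h^{-1} = (h F_0 h^{-1})(h k_0 h^{-1})$.

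The membership $h k_0(z,\bar z) h^{-1} \in K_1$ is immediate from \eqref{isostabi}, since $k_0$ is $K_0$-valued and $K_1 = hK_0h^{-1}$. The only genuine subtlety — the "hard part" — is bookkeeping around the normalization at the base point and the non-uniqueness of $h$: one has to verify that the $K_0$-valued correction factor $k_0$ can be chosen so that $h F_0 h^{-1}$ restricted to $z_1$ equals $e$ up to the permitted $K_1$-ambiguity, i.e. that the two normalizations $F_0(z_0,\bar z_0)=e$ and $F_1(z_1,\bar z_1)=e$ are compatible under the chosen $h$. This is where one invokes the freedom in the Remark: replacing $h$ by $k_1 h k_0'$ for suitable $k_1 \in K_1$, $k_0' \in K_0$ does not change $\varphi$ on cosets but adjusts the frame by $K_1$-factors on the right, which is exactly the slack available in the notion of frame. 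Everything else is a routine, essentially formal, manipulation of coset representatives and the homomorphism $Ad(h)$, so I expect no further obstacle once the base-point normalization is handled.
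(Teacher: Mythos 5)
Your core argument is the paper's own: from $f_1=\varphi\circ f_0$ and the frame property $f_0=F_0K_0$ one gets the pointwise coset identity $f_1=(hF_0h^{-1})K_1$, so $F_1$ and $hF_0h^{-1}$ differ by a right $K_1$-valued factor, which by \eqref{isostabi} may be written as $hk_0h^{-1}$ with $k_0$ taking values in $K_0$; this is exactly the ``straightforward'' derivation the paper intends, and it proves the displayed identity. (The appeal to Lemma~\ref{relation of loop groups} is superfluous here: the corollary is a statement about the $\lambda$-independent frames, and nothing beyond being a smooth lift of $f_1$ has to be checked.)

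The last paragraph of your proposal, however, is off target in two respects. First, the identity of the corollary needs no base-point bookkeeping at all: it follows for \emph{any} frame $F_1$ of $f_1$ from the coset relation alone, and the normalizations $F_0(z_0,\bar z_0)=e$, $F_1(z_1,\bar z_1)=e$ merely pin down the values of $k_0$ at those points — they enter neither your argument nor the paper's. Second, your proposed resolution of the ``hard part'' is incorrect as stated: replacing $h$ by $k_1hk_0'$ \emph{does} change $\varphi$, since then $gK_0\mapsto k_1hk_0'g\,k_0'^{-1}h^{-1}K_1$, which differs from $hgh^{-1}K_1$ for general $g$ (only the image of $eK_0$ is unaffected), and hence it changes $f_1=\varphi\circ f_0$ as well. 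The tension you are sensing is real but lives in the paper's preceding Lemma rather than in the corollary: with $f_1:=\varphi\circ f_0$ one has $f_0(z_1)=hK_0$ and therefore $f_1(z_1)=\varphi(hK_0)=hK_1$, which equals $eK_1$ only when $h\in K_1$, so the compatibility of the normalization $F_1(z_1,\bar z_1)=e$ with this definition of $f_1$ is genuinely delicate — but resolving it is not required for, and cannot be achieved by, the argument you gave, and it is not part of what the corollary asserts.
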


\begin{lemma}
Using  ${\sigma_1}$ and the corresponding twisted loop groups we can now introduce 
 the loop parameter $\lambda$ as in Section \ref{sect:1}and  extend the frame ${F_1}(z, \bar z)$
 to the extended frame  ${F_1}(z, \bar z, \lambda)$ for all $z \in \D, \lambda \in \C^*$. It satisfies

\begin{equation}
F_1(z, \bar z, \lambda) \in  \Lambda  {G}_{{\sigma_1}} \hspace{2mm} \mbox{and } \hspace{2mm} 
{F_1}(z_1, \bar{z}_1, \lambda) = e.
\end{equation}
\end{lemma}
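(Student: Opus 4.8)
The plan is to deduce both assertions directly from the construction relative to $z_0$ by transporting everything via the inner automorphism $\mathrm{Ad}(h)$. First I would recall from Corollary~\ref{trafoframe} that the (un-loopified) frame is $F_1(z,\bar z) = h F_0(z,\bar z) k_0(z,\bar z) h^{-1}$, where $k_0(z,\bar z)$ is the gauge that normalizes $F_0$ so that $F_0(z_0,\bar z_0)=e$; by the freedom of choosing the frame I may absorb $k_0$ and in fact choose $F_1(z,\bar z) = h F_0(z,\bar z) h^{-1}$ up to a further right $K_1$-gauge that does not affect the extended frame. Then I would compute the Maurer--Cartan form: $\alpha_1 = F_1^{-1}dF_1 = \mathrm{Ad}(h)^{-1}\alpha_0$, so that $\alpha_1 = \mathrm{Ad}(h)^{-1}\alpha_0$. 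Since by (\ref{trafosigma}) we have $\sigma_1 = \mathrm{Ad}(h)\sigma_0\mathrm{Ad}(h)^{-1}$, conjugation by $h$ carries the $\sigma_0$-eigenspace decomposition $\mathfrak g = \mathfrak k_0 \oplus \mathfrak p_0$ to the $\sigma_1$-eigenspace decomposition $\mathfrak g = \mathfrak k_1 \oplus \mathfrak p_1$, i.e. $\mathfrak k_1 = \mathrm{Ad}(h)\mathfrak k_0$ and $\mathfrak p_1 = \mathrm{Ad}(h)\mathfrak p_0$. Hence the $\mathfrak k$- and $\mathfrak p$-components of $\alpha_1$ are exactly $\mathrm{Ad}(h)^{-1}$ applied to those of $\alpha_0$, and in particular $\alpha_1$ decomposes compatibly with $\sigma_1$.

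Next I would introduce the loop parameter exactly as in Section~\ref{sect:1}: the loopified form is
\begin{equation}
\alpha_{1,\lambda} = \lambda^{-1}\alpha_1' + \alpha_{1,k} + \lambda\alpha_1'' = \mathrm{Ad}(h)^{-1}\alpha_{0,\lambda},
\end{equation}
the last equality because $\mathrm{Ad}(h)$ is $\lambda$-independent and commutes with the $\lambda$-weighting, using $\alpha_i' = \mathrm{Ad}(h)^{-1}\alpha_0'$ etc. Since $\alpha_{0,\lambda}$ is integrable for all $\lambda\in\C^*$ (Lemma on solvability), so is $\alpha_{1,\lambda}$, and therefore the ODE $dF_{1,\lambda} = F_{1,\lambda}\alpha_{1,\lambda}$ with $F_{1,\lambda}(z_1,\bar z_1,\lambda)=e$ has a unique solution on all of $\D$; this is precisely the extended frame $F_1(z,\bar z,\lambda)$. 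Comparing ODEs, one checks that $h F_0(z,\bar z,\lambda) h^{-1}$ solves the same equation but with initial value $h F_0(z_1,\bar z_1,\lambda) h^{-1}$ at $z_1$, which in general is not $e$; so the clean identity one actually obtains is $F_1(z,\bar z,\lambda) = h F_0(z,\bar z,\lambda) F_0(z_1,\bar z_1,\lambda)^{-1} h^{-1}$, i.e. a left translate of the conjugated $z_0$-frame. This also confirms $F_1(z_1,\bar z_1,\lambda)=e$ by construction.

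For the reality/twisting statement $F_1(z,\bar z,\lambda)\in\Lambda G_{\sigma_1}$, I would argue as follows. By the analogue of Lemma~\ref{relation of loop groups} for the real twisted loop group (noted right after it), $\Lambda G_{\sigma_1} = h\,\Lambda G_{\sigma_0}\,h^{-1}$. From the cornerstone lemma of Section~\ref{sect:1}, $F_0(z,\bar z,\lambda)\in\Lambda G_{\sigma_0}$ for all $z$; evaluating at $z_1$ shows $F_0(z_1,\bar z_1,\lambda)\in\Lambda G_{\sigma_0}$ as well, hence so is the product $F_0(z,\bar z,\lambda) F_0(z_1,\bar z_1,\lambda)^{-1}$. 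Conjugating by $h$ and using the displayed relation between the two loop groups gives $F_1(z,\bar z,\lambda) = h\big(F_0(z,\bar z,\lambda) F_0(z_1,\bar z_1,\lambda)^{-1}\big)h^{-1}\in h\,\Lambda G_{\sigma_0}\,h^{-1} = \Lambda G_{\sigma_1}$, as claimed.

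\textbf{Main obstacle.} The only delicate point is the bookkeeping at the base point: naively conjugating $F_0$ by $h$ does not send $e\mapsto e$ at $z_1$, so one must either multiply on the left by the constant $h F_0(z_1,\bar z_1,\lambda)^{-1} h^{-1}$ or, equivalently, re-normalize the frame using the right $K_1$-gauge freedom from Corollary~\ref{trafoframe}; one should check that this left factor indeed lies in $\Lambda G_{\sigma_1}$ (it does, being a conjugate of an element of $\Lambda G_{\sigma_0}$) so that the twisting statement survives the renormalization. Everything else is a routine transport of the $z_0$-construction through the fixed Lie-group automorphism $\mathrm{Ad}(h)$.
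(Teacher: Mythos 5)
There is a genuine gap, and it sits exactly at the point you flag as the ``only delicate point''. First, a sign slip that matters: if you take the frame $h F_0(z,\bar z)h^{-1}$, its Maurer--Cartan form is $\mathrm{Ad}(h)\alpha_0$, not $\mathrm{Ad}(h)^{-1}\alpha_0$. Since you correctly record $\mathfrak{k}_1=\mathrm{Ad}(h)\mathfrak{k}_0$ and $\mathfrak{p}_1=\mathrm{Ad}(h)\mathfrak{p}_0$ from (\ref{trafosigma}), your claim that the components of $\alpha_1$ are $\mathrm{Ad}(h)^{-1}$ of those of $\alpha_0$ puts them in $\mathrm{Ad}(h)^{-1}\mathfrak{k}_0$ and $\mathrm{Ad}(h)^{-1}\mathfrak{p}_0$, which are not the $\sigma_1$-eigenspaces; with that orientation the compatibility of the loopified form with $\sigma_1$, which is the whole point, fails. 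Second, and more seriously, your repair of the base-point normalization is not legitimate: the formula $F_1(z,\bar z,\lambda)=h\,F_0(z,\bar z,\lambda)F_0(z_1,\bar z_1,\lambda)^{-1}h^{-1}$ corrects the initial value by a \emph{right} multiplication with a $\lambda$-dependent constant loop. A right constant factor conjugates the Maurer--Cartan form, so the resulting map has
$F_1^{-1}dF_1=\mathrm{Ad}\big(hF_0(z_1,\bar z_1,\lambda)\big)\alpha_{0,\lambda}$, which no longer has the form $\lambda^{-1}\alpha_1'+\alpha_{1,k}+\lambda\alpha_1''$ with $\lambda$-independent coefficients; moreover it does not even project to $f_1$ modulo $K_1$, since $hF_0(z_1,\bar z_1,\lambda)^{-1}h^{-1}\notin K_1$ in general. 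So the object you exhibit does satisfy $F(z_1,\bar z_1,\lambda)=e$ and lies in $\Lambda G_{\sigma_1}$, but it is not the extended frame of $f_1$, and the lemma is not proved by it. (Preserving the ODE $dF=F\alpha_{1,\lambda}$ requires a \emph{left} constant factor, but a left factor changes the framed map; the only admissible right factors are $\lambda$-independent $K_1$-valued gauges.)

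The paper avoids this trap by not discarding the gauge: by Corollary \ref{trafoframe} the frame of $f_1$ normalized at $z_1$ is $F_1(z,\bar z)=\big(hF_0(z,\bar z)h^{-1}\big)\big(hk_0(z,\bar z)h^{-1}\big)$ with a $K_1$-valued ($\lambda$-independent) gauge $hk_0h^{-1}$, and one then simply runs the Section \ref{sect:1} procedure verbatim for the pair $(z_1,\sigma_1)$: decompose the Maurer--Cartan form of this already-normalized frame relative to $\sigma_1$, loopify, and integrate with initial condition $e$ at $z_1$; integrability for all $\lambda$ holds because $f_1$ is harmonic (it is $\varphi\circ f_0$ with $\varphi$ an isometry), and $F_1(\cdot,\cdot,\lambda)\in\Lambda G_{\sigma_1}$ follows from the cornerstone lemma of Section \ref{sect:1}, equivalently from Lemma \ref{relation of loop groups}. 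Your use of $\Lambda G_{\sigma_1}=h\Lambda G_{\sigma_0}h^{-1}$ for the twisting statement is fine in itself; what needs to be redone is the normalization step, either by keeping the $K_1$-valued gauge before loopifying (as in the paper) or by otherwise arguing with a correction that respects both the twisted Maurer--Cartan structure and the projection to $G/K_1$.
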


The next step of our discussion is to consider the (meromorphic) normalized extended frame 
$F_{1,-}(z, \lambda)$  for ${f_1}$.
It is obtained from the 
(unique) Birkhoff splitting
\begin{equation}
{F_1}(z, \bar z, \lambda) = F_{1,-}(z,  \lambda) L_{1,+}(z, \bar z, \lambda),
\end{equation}
where $ F_{1,-}$ is chosen w.l.g. to have the form $ F_{1,-}(z, \lambda) = e + \mathcal{O}(\lambda^{-1})$.
The uniqueness of the splitting implies $ F_{1,-}(z_1, \lambda) = e$.

Note that here the normalized potential $\eta_{1,-} =  F_{1,-}^{-1}d F_{1,-}$ is meromorphic and 
integrates to $ F_{1,-}$, 
if one chooses the intial condition $ F_{1,-}(z_1, \lambda) = e$.

In view of Lemma \ref{relation of loop groups}  and Corollary \ref{trafoframe} we obtain

\begin{theorem}
The following relations hold:

$\bullet$ $F_1(z, \bar z, \lambda) =( h F_0(z, \bar z,\lambda)h^{-1}) \cdot (h k_0(z, \bar z) h^{-1}),$

$\bullet$ $F_{1,-}(z,\lambda) = h F_{0,-}(z, \lambda) h^{-1},$

$\bullet$ $F_{1,+}(z, \bar z, \lambda) =( h F_{0,+}(z, \bar z,\lambda)h^{-1}) \cdot (h k_0(z, \bar z) h^{-1}).$
\end{theorem}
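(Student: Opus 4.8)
The plan is to derive all three relations from the two structural facts already established: Lemma~\ref{relation of loop groups} (and its noted analogue for the $\pm$ and real loop groups), which says $\Lambda G^{\C}_{\sigma_1} = h\,\Lambda G^{\C}_{\sigma_0}\,h^{-1}$, and Corollary~\ref{trafoframe}, which gives the pointwise identity $F_1(z,\bar z) = h F_0(z,\bar z) k_0(z,\bar z) h^{-1}$ between frames before the loop parameter is introduced. The first bullet is essentially just the $\lambda$-dependent version of Corollary~\ref{trafoframe}: one observes that conjugation by the constant element $h$ commutes with the insertion of $\lambda$ into the Maurer--Cartan form, i.e.\ $Ad(h)$ sends the $\mathfrak{p}_0^{\C}$-part to the $\mathfrak{p}_1^{\C}$-part (by \eqref{trafosigma}) and fixes the $\mathfrak{k}$-grading, so that $\alpha_{1,\lambda} = Ad(h)\alpha_{0,\lambda} + (\text{the }k_0\text{-correction conjugated by }h)$. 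Integrating $dF_{1,\lambda} = F_{1,\lambda}\alpha_{1,\lambda}$ with initial condition $F_1(z_1,\bar z_1,\lambda)=e$, and using uniqueness of solutions of the linear ODE, identifies $F_1(z,\bar z,\lambda)$ with $\bigl(hF_0(z,\bar z,\lambda)h^{-1}\bigr)\cdot\bigl(hk_0(z,\bar z)h^{-1}\bigr)$, exactly as in the pointwise case; one also checks this candidate lies in $\Lambda G_{\sigma_1}$ via the analogue of Lemma~\ref{relation of loop groups}.

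For the second bullet, I would start from the first bullet and apply the uniqueness of the Birkhoff splitting. Write $hF_0(z,\bar z,\lambda)h^{-1} = (hF_{0,-}(z,\lambda)h^{-1})(hL_{0,+}(z,\bar z,\lambda)h^{-1})$, obtained by conjugating the splitting of $F_0$; by the analogue of Lemma~\ref{relation of loop groups} for the $\pm$ subgroups, $hF_{0,-}h^{-1} \in \Lambda^- G^{\C}_{\sigma_1}$ and $hL_{0,+}h^{-1}\in\Lambda^+G^{\C}_{\sigma_1}$. Since $F_{0,-}(z,\lambda) = e + \mathcal O(\lambda^{-1})$ and conjugation by the $\lambda$-independent $h$ preserves this normalization, $hF_{0,-}h^{-1}$ is again of the normalized form $e + \mathcal O(\lambda^{-1})$. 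The extra factor $hk_0(z,\bar z)h^{-1}\in K_1\subset\Lambda^+G^{\C}_{\sigma_1}$ from the first bullet gets absorbed into the positive part. Hence $F_1(z,\bar z,\lambda) = (hF_{0,-}(z,\lambda)h^{-1})\cdot\bigl((hL_{0,+}h^{-1})(hk_0h^{-1})\bigr)$ is a Birkhoff splitting of $F_1$ with normalized negative part, and uniqueness forces $F_{1,-}(z,\lambda) = hF_{0,-}(z,\lambda)h^{-1}$. (As a sanity check, $F_{1,-}(z_1,\lambda) = hF_{0,-}(z_0,\lambda)h^{-1} = heh^{-1} = e$, consistent with the normalization at $z_1$; note $F_{0,-}$ is evaluated at $z_0$ here because the normalization points differ, but $F_{0,-}(z_0,\lambda)=e$.)

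For the third bullet, the positive part $F_{1,+}$ is by definition the complementary factor in the Iwasawa-type splitting $F_{1,-} = F_1\cdot F_{1,+}^{-1}$, equivalently $F_{1,+} = F_{1}^{-1}F_{1,-}$ up to the usual conventions; substituting the first two bullets gives $F_{1,+}(z,\bar z,\lambda) = (hk_0h^{-1})^{-1}(hF_0h^{-1})^{-1}\cdot hF_{0,-}h^{-1} = \bigl(hF_{0,+}(z,\bar z,\lambda)h^{-1}\bigr)\cdot(hk_0(z,\bar z)h^{-1})$ after regrouping, using $F_{0,+} = F_0^{-1}F_{0,-}$ and that $hk_0h^{-1}$ commutes past into the $\Lambda^+G^{\C}_{\sigma_1}$ factor. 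I would present this as a direct algebraic consequence of the first two relations together with the definition of the positive part, being careful about which normalization point ($z_0$ versus $z_1$) enters each factor.

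The main obstacle I anticipate is bookkeeping rather than conceptual: the two constructions are normalized at \emph{different} points ($F_0(z_0,\bar z_0,\lambda)=e$ versus $F_1(z_1,\bar z_1,\lambda)=e$), and the frame relation $F_1 = hF_0k_0h^{-1}$ only matches these normalizations because $h.f(z_0)=f(z_1)$ and $k_0(z_0,\bar z_0)$ is the identity, or can be chosen so; one must verify that the $k_0$ appearing is exactly the $K_0$-valued gauge that already showed up in Corollary~\ref{trafoframe} and that it behaves correctly under the loopification (it lies in $\mathfrak k_0$, hence contributes only to the $\lambda^0$-term, so loopification does not disturb the argument). The other delicate point is making sure the "leading term positive diagonal" normalization of the Iwasawa factor is preserved under conjugation by $h$ — this holds because $h$ is $\lambda$-independent and conjugation by a fixed group element is an automorphism of $\Lambda^+G^{\C}$, but it should be stated explicitly so that the uniqueness of the splitting can legitimately be invoked.
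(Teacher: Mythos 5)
Your argument is correct and follows essentially the same route as the paper: the first identity is the $\lambda$-dependent restatement of Corollary \ref{trafoframe}, and the other two are read off from the uniqueness of the Birkhoff splitting after conjugating the splitting of $F_0$ by $h$ (using the $\pm$-analogue of Lemma \ref{relation of loop groups}) and absorbing the $K_1$-valued factor $h k_0 h^{-1}$ into the positive factor. Only your parenthetical sanity check is garbled --- the relation $F_{1,-}(z,\lambda)=hF_{0,-}(z,\lambda)h^{-1}$ must be evaluated at the same point $z$ on both sides, so at $z=z_1$ it involves $F_{0,-}(z_1,\lambda)$ rather than $F_{0,-}(z_0,\lambda)$ --- but that remark is not load-bearing for the proof.
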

\begin{proof}
The first relation restates Corollary  \ref{trafoframe}. The remaining relations can be read off immediatly
from the Birkhoff decompositions of the first relation.

\end{proof}

\begin{remark}
The theorem just above presents a very natural and simple relation between the loop group methods
for harmonic maps relative to two base points $z_o$  and $z_1$. respectively.
\end{remark}


\subsection{\large{Relating maximal compact duals relative to different base points}}

Finally we discuss how the compact duals \cite{DoWa:Dual} of $G/K_0$ and $G/K_1$ are related, if $G/K_0$ is the inner
symmetric space related to the base point $z_0$ with isotropy group $K_0$ and $G/K_1$ is the 
isomorphic inner symmetric space related to the base point $z_1$ with isotropy group $K_1.$
Note that we assume now, as always in \cite{DoWa:Dual}, that $G$ is simply connected.
Moreover, we retain the notation  of the previous sections.

\subsubsection{\large{About the construction of a compact dual symmetric space relative to $z_0$}}

An inspection of the discussion of the paper \cite{DoWa:Dual} shows that  for the construction of the 
compact dual
we need three pairwise commuting involutive automorphisms of the complexification 
$G^\C$ of $G.$

$(1)$ The (anti-holomorphic) real form involution $\tau_0$ of the complex  Lie algebra $\mathfrak{g_0^\C}$
and its extension to $G^\C$.

We note that this involution is the same for all base points in $\D$.

$(2)$ The (holomorphic) inner symmetric space involutions $\sigma_0$ and ${\sigma_1}$ respectively.

We note that (\ref{trafosigma}) shows that these two involutions are related  by the automorphism
$Ad(h)$ of $G$ resp. $\mathfrak{g}.$ 

$(3)$ A (anti-holomorphic) Cartan involution $\theta$ of $\mathfrak{g}^\C.$

Recall again that $\tau_0$, $\sigma_0,$ and $\theta_0$ commute pairwise.
In section 3.1 of \cite{DoWa:Dual} we have explained how the compact dual is constructed  from such data.

We also know from the examples presented in section 6 of  \cite{DoWa:Dual} that, in general, 
the compact dual is not unique.


\subsubsection{\large{About the construction of a compact dual inner symmetric space relative to $z_1$}}

When considering the construction of a compact dual for the base point $z_1$ we use the symmetric space 
$G/K_1$.

$(1)^\star$ It is easy to see that the real form involution ${\tau_1}$ is equal to $\tau_0$. 
We will just write $\tau$ from here on.

$(*)$ Since $h$ is real, $\tau$ commutes with $Ad(h).$

$(2)^\star$ The symmetric space involution ${\sigma_1}$ for $z_1$ satisfies
 \begin{equation} 
 {\sigma_1} = Ad(h) \sigma_0 Ad(h)^{-1},
 \end{equation}
 by (\ref{trafosigma}).
 
 $(3)^\star$ As in the case of the basepoint $z_0,$ for the construction of a compact dual it suffices  to consider a Cartan involution $\theta_1$ which commutes with $ \tau$  and ${\sigma_1}$.

 \begin{theorem}
 For the construction of a compact dual for theinner  symmetric space relative to $z_1$ it suffices 
 to consider the three pairwise commuting involutions
 
 $(a)$ $\tau$,
 
 $(b)$ ${\sigma_1}  = Ad(h) \sigma_0 Ad(h)^{-1}$,
 
 $(c)$ ${\theta_1}  = Ad(h) \theta_0 Ad(h)^{-1}.$
 
 This correspondence yields a  $1-1-$relation between all compact duals relative to $z_0$ and 
 all compact duals relative to $z_1$.
  \end{theorem}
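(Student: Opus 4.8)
The plan is to check that the triple $(\tau,\sigma_1,\theta_1)$ meets the three requirements imposed by the construction of \cite{DoWa:Dual} — namely, that each entry is an involutive automorphism of the complexification $G^{\C}$ of the prescribed type (real form, inner symmetric space, Cartan), and that the three commute pairwise — and then to produce the asserted $1$–$1$ correspondence by conjugation with $Ad(h)$. Throughout, $G$ is simply connected as assumed in \cite{DoWa:Dual}, and $h\in G$ is a fixed element with $h.f(z_0)=f(z_1)$; all the raw material has in fact already been assembled above: by $(1)^\star$ the real form involution does not change, so $\tau$ serves for $z_1$, giving $(a)$; by $(2)^\star$, i.e.\ by (\ref{trafosigma}), the inner symmetric space involution for $z_1$ is $\sigma_1=Ad(h)\sigma_0 Ad(h)^{-1}$, giving $(b)$; and by $(*)$, since $h$ is real, $\tau$ commutes with $Ad(h)$.

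For $(c)$ I would first observe that $Ad(h)$ extends $\C$-linearly to an automorphism of $\mathfrak g^{\C}$ which preserves the real form $\mathfrak g$ of $\mathfrak g^{\C}$; since $\theta_0$ is an anti-holomorphic involution and $Ad(h)$ is holomorphic, $\theta_1:=Ad(h)\theta_0 Ad(h)^{-1}$ is again anti-holomorphic and squares to the identity. Moreover $Ad(h)$ carries the Cartan decomposition determined by $\theta_0$ onto the eigenspace decomposition of $\theta_1$, so in particular $\mathfrak g^{\theta_1}=Ad(h)\bigl(\mathfrak g^{\theta_0}\bigr)$ is again a maximal compact subalgebra of $\mathfrak g$. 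Hence $\theta_1$ is a Cartan involution of $\mathfrak g^{\C}$ in the sense required by \cite{DoWa:Dual}, which is exactly $(c)$.

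Next I would verify pairwise commutativity. Since $\theta_0$ commutes with $\tau$ and, by $(*)$, $Ad(h)$ commutes with $\tau$ (hence so does $Ad(h)^{-1}$), one gets
\[
\theta_1\tau=Ad(h)\theta_0 Ad(h)^{-1}\tau=Ad(h)\theta_0\tau Ad(h)^{-1}=Ad(h)\tau\theta_0 Ad(h)^{-1}=\tau\theta_1,
\]
and the identical computation with $\sigma_0$ in place of $\theta_0$ shows that $\tau$ commutes with $\sigma_1$. Finally, conjugating the relation $\theta_0\sigma_0=\sigma_0\theta_0$ by $Ad(h)$ and cancelling the inner $Ad(h)^{-1}Ad(h)$ gives
\[
\theta_1\sigma_1=Ad(h)\,\theta_0\sigma_0\,Ad(h)^{-1}=Ad(h)\,\sigma_0\theta_0\,Ad(h)^{-1}=\sigma_1\theta_1.
\]
Thus $(\tau,\sigma_1,\theta_1)$ is a pairwise commuting triple of involutions of $G^{\C}$ of precisely the kind fed into the construction of section 3.1 of \cite{DoWa:Dual}, and that construction applied to this triple produces a compact dual of the inner symmetric space $G/K_1$ attached to $z_1$.

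For the final assertion I would note that, with $\tau$ and the symmetric space involution held fixed, a compact dual relative to $z_0$ is recorded by a choice of Cartan involution $\theta_0$ commuting with $\tau$ and $\sigma_0$, and one relative to $z_1$ by a Cartan involution $\theta_1$ commuting with $\tau$ and $\sigma_1$. The assignment $\theta_0\mapsto Ad(h)\theta_0 Ad(h)^{-1}$ sends the first collection into the second (this is what the previous two paragraphs check), while $\theta_1\mapsto Ad(h)^{-1}\theta_1 Ad(h)$ sends the second into the first, using $Ad(h)^{-1}\sigma_1 Ad(h)=\sigma_0$ from (\ref{trafosigma}) together with the same commutation computations read in reverse; these two maps are mutually inverse, and transporting them through the construction of \cite{DoWa:Dual} yields the claimed bijection between the compact duals. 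The step I expect to require the most care is not any of these computations but the \emph{well-definedness} issue flagged in the Remark: $h$ is only determined up to right multiplication by $K_0$ and left multiplication by $K_1$, and replacing $h$ by $k_1hk_0$ modifies $\theta_1$ by the extra conjugation with $Ad(k_1)$ and re-selects $\theta_0$ within its $z_0$-admissible family via $Ad(k_0)$ (note $Ad(k_0)$ fixes $\sigma_0$ since $k_0\in K_0$). For the theorem as stated it is enough that \emph{each} admissible $h$ induces such a bijection, which is all that is asserted; the dependence of the specific bijection on $h$ is then precisely the one described in the Remark, and it disappears after passing to the equivalence relation on compact duals used in \cite{DoWa:Dual}.
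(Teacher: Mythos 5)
Your proposal is correct and follows the same route the paper takes (the paper leaves the proof implicit in the preceding items $(1)^\star$, $(*)$, $(2)^\star$, $(3)^\star$): you simply write out the routine verifications that conjugation by $Ad(h)$ turns $\theta_0$ into a Cartan involution commuting with $\tau$ and $\sigma_1$, and that this conjugation is invertible, giving the bijection of admissible triples and hence of compact duals. Your added remark on the dependence on the choice of $h$ is a sensible supplement and consistent with the paper's Remark on the ambiguity of $h$.
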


\begin{corollary}
By the correspondences  given in the  theorem just above all base points 
$z_* \in \D$ naturally yield isomorphic compact duals.
In particular, up to the correspondence above, all possible compact duals occur for any fixed base point.
\end{corollary}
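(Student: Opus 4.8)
The plan is to read the corollary off the preceding theorem together with the functoriality of the compact-dual construction of \cite{DoWa:Dual}. First I would fix the reference base point $z_0$ and an arbitrary $z_* \in \D$. Since $G$ acts transitively on $\mathcal{S}$ there is an $h \in G$ with $h.f(z_0) = f(z_*)$, and by (\ref{trafosigma}) and the preceding theorem the conjugation $Ad(h)$ carries any admissible triple $(\tau,\sigma_0,\theta_0)$ of pairwise commuting involutions used to construct a compact dual relative to $z_0$ to the triple $(\tau,\sigma_*,\theta_*)$ with $\sigma_*=Ad(h)\sigma_0 Ad(h)^{-1}$ and $\theta_*=Ad(h)\theta_0 Ad(h)^{-1}$, which is exactly an admissible triple relative to $z_*$; conjugation by $h^{-1}$ undoes this, so $Ad(h)$ is a bijection between the two sets of admissible triples.

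Next I would invoke the construction of Section 3.1 of \cite{DoWa:Dual}: the compact dual attached to an admissible triple $(\tau,\sigma,\theta)$ is produced from it by a fixed Lie-theoretic recipe inside $G^{\C}$ (passage to the relevant fixed subalgebras, to the compact real form determined by $\theta$, and to the associated symmetric subgroup). Because $Ad(h)$ is a holomorphic automorphism of $G^{\C}$ which fixes $\tau$ and intertwines $(\sigma_0,\theta_0)$ with $(\sigma_*,\theta_*)$, it maps each object produced by this recipe from $(\tau,\sigma_0,\theta_0)$ isomorphically onto the corresponding object produced from $(\tau,\sigma_*,\theta_*)$, and hence induces an isomorphism of the compact dual built at $z_0$ from $(\tau,\sigma_0,\theta_0)$ onto the one built at $z_*$ from $(\tau,\sigma_*,\theta_*)$. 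That is the meaning of ``$z_*$ naturally yields an isomorphic compact dual''.

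For the second sentence I would use that, for a fixed base point, the admissible Cartan involutions are precisely the Cartan involutions of $\mathfrak{g}^{\C}$ commuting with $\tau$ and with the symmetric space involution attached to that base point. Thus $\theta_0 \mapsto Ad(h)\theta_0 Ad(h)^{-1}$ is a bijection from the admissible Cartan involutions at $z_0$ onto those at $z_*$ (commutation with $\tau$ and $\sigma_*$ is preserved since $Ad(h)$ fixes $\tau$ and sends $\sigma_0$ to $\sigma_*$, and conjugation by $h^{-1}$ is the inverse). Combining this with the previous paragraph, the family of compact duals constructible at $z_*$ is exactly the $Ad(h)$-image of the family constructible at $z_0$, with corresponding members isomorphic; in particular every compact dual that occurs for some base point occurs, up to isomorphism, for every base point. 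I would also record that this correspondence is consistent: replacing $h$ by $k_1 h k_0$ with $k_i \in K_i$ (the Remark following (\ref{zo->z1})) only permutes the admissible $\theta$'s within each base point's family, since $Ad(k_i)$ commutes with $\tau$ and with $\sigma_i$; and for three base points the induced bijections compose because $Ad$ is a homomorphism.

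I expect the main obstacle to be the functoriality step in the second paragraph --- verifying that an automorphism of $G^{\C}$ intertwining two admissible triples really descends to an isomorphism of the compact duals as built in \cite{DoWa:Dual}. Since the compact dual is not unique, one must make sure that the ambiguity in its construction is exactly the choice of admissible $\theta$ and carries no additional data that could fail to be transported by $Ad(h)$; granting the analysis of \cite{DoWa:Dual} this is straightforward, and everything else is the formal bookkeeping sketched above.
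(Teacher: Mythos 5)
Your proposal is correct and follows essentially the same route the paper intends: the corollary is read off the preceding theorem's $1$--$1$ correspondence of admissible triples $(\tau,\sigma,\theta)$ under $Ad(h)$-conjugation, together with the fact that the construction of \cite{DoWa:Dual} transports along such an automorphism, which is exactly what you spell out. Your additional checks (independence of the choice of $h$ up to $K_i$-factors, compatibility for three base points) are sound bookkeeping that the paper leaves implicit.
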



\subsection{\large{The influence of a base point on the DPW procedure in the case  $\D= S^2$ }} \label{sectionS^2}
In this section we assume that $\D$ is the  Riemann surface $S^2.$

Let $f:S^2 \rightarrow G/K$ be harmonic.
Let's choose two points $z_0, z_1 \in S^2$ to be used as base points for the loop group method.

We assume w.l.g. that both points are contained in the equator of $S^2$ relative to the 
northpole and the southpole.

Let $V_S$ be the lower hemisphere extended by  a small  open annulus (containing the equator), 
basically the open disk $\D_{1 + \epsilon}^S$ of radius $1 + \epsilon$ and center $0 = $southpole.

Analogously we define $V_N$ or by reflection of $V_S$ across the equator.

Next we consider the pair of harmonic maps $f_S = f_{|V_S}$ and $f_N = f_{|V_N}$ separately 
and proceed for each of them with the loop group method relative to the points $z_0$ and $z_1$ 
respectively as in the preceding sections.

Consider, for the time being, the harmonic maps $f_S$. Then, as  above, we can choose an
$h^S \in G$ mapping $f_S(z_0)$ to $f_S(z_1)$.
For these choices the results of the previous sections apply.

Similarly, one can choose $h^N \in G$ mapping $f_N(z_0)$ to $f_N(z_1)$.
And also for these choices the results of the previous sections apply.

It is clear, that we can choose $h^N  = h^S,$  observing that this choice is independent of $\lambda$.

 Now it is easy to verify that the separate results for
 the pairs $(f_S, z_0 )$ and $(f_S, z_ 1)$ as well as the pairs  $(f_N, z_0 )$ and $(f_N, z_ 1)$ match up and yield the global relation for the pairs $(f_0, z_0)$ and $(f_1,z_1)$ on $S^2$.
 
 Therefore, for $\D = S^2$ we have the analogous  relation as before.
 
 We leave the details to the reader.

\section{An alternative way to handle different base points}

In addition to the classical DPW-approach, as explained in the last two sections,  
which handles different base points in a unified manner, there exist "modified" approaches
which can be phrased about as explained below. 
The main goal is, as above, to consider some harmonic map and to describe it via a loop group method
similar to \cite{DPW} for two different base points.

For relating the normalized frames obtained by the DPW-method  for two different basepoints
we will use the notation applied in the last section.

We consider some symmetric space $ S = G/K_0$, but, in this section, we  will never change
its realization as the quotient space $G/K_0$.

As in the previous section we consider two  base points, $z_0, z_2 \in \D,$ where $\D$ is, up to the last subsection of this section, the open  unit disk or the whole complex plane.
The case $\D = S^2$ will be discussed in the last subsection of this section.

 As in the last section we consider a harmonic map
$f_0 : \D \rightarrow G/K_0$ satisfying $f(z_0) = eK_0$.

In this setting we can apply the standard DPW-procedure for $f_0$ and $z_0$. as discussed 
in the first four subsections of the last section.

It is sometimes  useful to consider two different  base points. 

To avoid any confusion with 
$z_1$, used in the previous section, we will  denote  the new second base point by $z_2$ in this section.

\subsection{From immersions to extended frames}

Under the general assumptions  just stated above there exists some $ \mathring{g}  \in G$ satisfying 

\begin{equation} \label{defmathring}
f_0(z_2) = \mathring{g}.f_0(z_0) .
\end{equation}

Note that if  two group elements, $\mathring{g}, \check{g}$, both satisfy the relation above, then 
$\check{g} = k_2 \mathring{g} k$ for some $k \in K_0$ and some $k_2  \in G$  fixing $f(z_2).$

For convenience we set $$ f_2(z) = \mathring{g}.f_0(z).$$

Clearly, $f_2$ is a harmonic map which  satisfies $f_2(z_2) = e K_0.$

Fixing some $\mathring{g}$ as above, we 
 carry out the classical DPW-procedure
 for $f_2$ as explained in the last section, but with $z_0$ replaced by $z_2$.
 
 We thus have two pairs, $(f_0,z_0)$ and $(f_2,z_2)$. Since $f_0(z_0) = eK$ as well as 
 ${f_2}(z_2) = eK,$ we can work out the standard DPW-Procedure relative to $S = G/K_0$
 for each of these pairs.
 
 The main issue in this section is to discuss the relation between these two descriptions.

If $F_0$ is a frame for $f_0$ satisfying $F_0(z_0,\overline{z_0}) = I$ and $f _0= F_0.eK = F_0.f(z_0),$ and if 
 ${F_2}$ is a frame for ${f_2}$ satisfying ${F_2}(z_2,\overline{z_2}) = I$ and 
 ${f_2} = {F_2}.eK_0 ={F_2}.f(z_2),$  then (\ref{defmathring}) implies 
 $ {f_2} = {F_2}.eK_0 = \mathring{g}F_0.eK_0,$ from which we derive 
 
 \begin{equation} \label{relateFandhat{F}}
 {F_2} = \mathring{g} F_0 k_0,
 \end{equation}
where $k_0 $ takes values in $K$.

From this  we obtain, as in the last section,  an extended frame ${F_2}(z, \bar z, \lambda)$ satisfying 
${F_2}(z_2, \overline{z_2}, \lambda) = I$  for all $ \lambda \in S^1$ and 
${F_2}(z, \bar z, \lambda = 1) = {F_2}(z, \bar z).$

Once we have defined the family of frames ${F_2}(z, \bar z, \lambda)$ we can define the associated family of harmonic maps from $\D$ to $G/K_0:$

$${f_2}(z, \bar z, \lambda) = {F_2}(z, \bar z, \lambda). eK_0$$

The primary objects, the associated family of ${f_2}$ and the extended frame of ${f_2},$ normalized at $z_2$, 
 can then be obtained by

\begin{equation}
{f_2}(z, \bar z,\lambda)  = \mathring{g}(\lambda) f_0(z, \bar z, \lambda),
\end{equation}

and

\begin{equation}  \label{relateextframes}
 {F_2}(z, \bar z,\lambda)  = \mathring{g}(\lambda) F_0(z, \bar z, \lambda) k_0(z, \bar z).
 \end{equation}

for some $\mathring{g} (\lambda) = \Lambda G_{\sigma}$ satisfying  $\mathring{g} (\lambda = 1) = \mathring{g}.$

Substituting $z_2$ into (\ref{relateextframes}) we derive from the last equation

\begin{equation}
\mathring{g} (\lambda)^{-1} = F_0(z_2, \overline{z_2},\lambda) k_0(z_2, \overline{z_2}).
\end{equation}

and substituting  $z_0$ for $z$   into (\ref{relateextframes}) yields

\begin{equation}
 F_2(z_0, \overline{z_0},\lambda) = \mathring{g}(\lambda)  k_0(z_0, \overline{z_0}).
\end{equation}

\subsection{From frames to normalized potentials}

 So far, the relation between the frames $F_0$ and ${F_2},$  based at $z_0$ and at $z_2$ respectively,
is very simple.

The usefulness of the DPW-method is related to the possibility to describe "potentials" and in particular
to describe normalized potentials.

The combination of the  two statements just above produces a complicated result.
It seems that, in a sense, these two statements are not "simply compatible", opposite to the situation in the last section.

We thus discuss the second statement above in the present situation.
For the frame $F_0$ this is the decomposition

\begin{equation}
F_0(z, \bar z, \lambda) = F_{0,-}(z, \lambda) F_{0,+}(z, \bar z, \lambda),
\end{equation}

and for the frame ${F_2}$  this is the decomposition

\begin{equation}
{F_2}(z, \bar z, \lambda) = {F}_{2,-}(z, \lambda) {F}_{2,+}(z, \bar z, \lambda),
\end{equation}

where the normalized frames  ${F}_{0,-}(z, \lambda)$ and $ {F}_{2,-}(z, \lambda)$ are, 
as functions of $z,$ meromorphic and ,
as functions of $\lambda,$ of the form $I + \mathcal{O}(\lambda^{-1})$, whence the 
decompositions are unique.

The normalized potentials $\eta_{0,-}$ and ${\eta}_{2,-}$  then are the Maurer-Cartan forms of  
$F_{0,-}(z, \lambda)$ and $ F_{2,-}(z, \lambda)$
respectively.

Since  we know $ {F_2}(z, \bar z,\lambda)  = \mathring{g}(\lambda) F_0(z, \bar z, \lambda) k_0(z, \bar z),$
we obtain (by using the unique Iwasawa decmposition)

\begin{equation}
{F_{2,-}}(z, \bar z,\lambda) = ( \mathring{g}(\lambda) F_0(z, \bar z, \lambda) )_-.
\end{equation}

Moreover, rewriting  
$$ {F_2}(z, \bar z,\lambda)  = \mathring{g}(\lambda) F_0(z, \bar z, \lambda)  k_0(z, \bar z),$$
 as

\begin{equation} \label{longequ}
 {F}_{2,-}(z, \bar z,\lambda)   = \mathring{g}(\lambda) F_{0,-}(z, \bar z, \lambda) \cdot (F_{0,+}(z, \bar z, \lambda) k_0(z, \bar z) {F}_{2,+}(z, \bar z,\lambda)^{-1})
 \end{equation}
 we infer

\begin{proposition}
If   $\mathring{g}(\lambda)$ has the Birkhoff decomposition 
$ \mathring{g}(\lambda)  = \mathring{g}_-(\lambda)   \mathring{g}_+(\lambda),$ 

then equation  (\ref{longequ})
can be rephrased involving the dressing  of  $F_{0,-}(z, \bar z, \lambda)$  by  $\mathring{g}_+ (\lambda) $

 $$ {F_{2,-}}(z, \bar z,\lambda)   = \mathring{g}_- (\lambda)  \cdot (\mathring{g}_+(\lambda) F_{0,-}(z, \bar z, \lambda) \mathring{g}_+(\lambda)^{-1}) \cdot Q_+(z, \bar z, \lambda)
 = \mathring{g}_-(\lambda)( F_{0,-}(z, \bar z, \lambda) \sharp \mathring{g}_+(\lambda)^{-1})_-.$$
 \vspace{2mm}
 
 Here ``$\sharp$" denotes dressing and we abbreviate 
 
 $$Q_+(z, \bar z, \lambda) = \mathring{g}_+(\lambda) \cdot  (F_{0,+}(z, \bar z, \lambda) k_0(z, \bar z) {F}_{2,+}(z, \bar z,\lambda)^{-1}).$$
 \end{proposition}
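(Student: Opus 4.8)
The plan is to start from equation~(\ref{longequ}), which already expresses ${F}_{2,-}$ as $\mathring{g}(\lambda) F_{0,-}(z,\bar z,\lambda)$ times a $\Lambda^+$-valued factor, and to substitute the Birkhoff decomposition $\mathring{g}(\lambda) = \mathring{g}_-(\lambda)\mathring{g}_+(\lambda)$ into the leading factor. First I would write
$$\mathring{g}(\lambda) F_{0,-}(z,\bar z,\lambda) = \mathring{g}_-(\lambda)\,\mathring{g}_+(\lambda)\, F_{0,-}(z,\bar z,\lambda) = \mathring{g}_-(\lambda)\,\bigl(\mathring{g}_+(\lambda) F_{0,-}(z,\bar z,\lambda)\mathring{g}_+(\lambda)^{-1}\bigr)\,\mathring{g}_+(\lambda),$$
which is the standard algebraic trick underlying the definition of dressing: one inserts $\mathring{g}_+^{-1}\mathring{g}_+$ so that $\mathring{g}_+$ acts by conjugation on $F_{0,-}$, at the cost of an extra $\mathring{g}_+(\lambda)$ on the right. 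Plugging this into~(\ref{longequ}) and collecting the rightmost $\mathring{g}_+(\lambda)$ together with the old $\Lambda^+$-factor $F_{0,+}(z,\bar z,\lambda) k_0(z,\bar z){F}_{2,+}(z,\bar z,\lambda)^{-1}$ into the single abbreviation $Q_+(z,\bar z,\lambda) = \mathring{g}_+(\lambda)\cdot(F_{0,+}(z,\bar z,\lambda) k_0(z,\bar z){F}_{2,+}(z,\bar z,\lambda)^{-1})$ gives exactly the first displayed identity of the proposition.

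Next I would identify the conjugated term with the dressing action: by definition $F_{0,-}(z,\bar z,\lambda)\sharp\mathring{g}_+(\lambda)^{-1}$ is the $\Lambda^+$-dressed object whose Birkhoff-minus part is $(\mathring{g}_+(\lambda) F_{0,-}(z,\bar z,\lambda)\mathring{g}_+(\lambda)^{-1})_-$; here one uses the convention that $h_+\sharp F_- := (h_+ F_- h_+^{-1})$ decomposed via Birkhoff, with the negative factor being the dressed frame. Since the remaining factor $Q_+(z,\bar z,\lambda)$ takes values in $\Lambda^+ G^{\mathbb C}_\sigma$ (being a product of $\mathring{g}_+$, the $\Lambda^+$-parts of the two Iwasawa-type splittings, and $k_0 \in K_0 \subset \Lambda^+ G^{\mathbb C}_\sigma$), the product $\mathring{g}_-(\lambda)\cdot(\mathring{g}_+ F_{0,-}\mathring{g}_+^{-1})\cdot Q_+$ is a Birkhoff factorization of ${F}_{2,-}$ into a $\Lambda^-$-piece (namely $\mathring{g}_-(\lambda)$ times the normalized $\Lambda^-$-part of the conjugate) and a $\Lambda^+$-piece. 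Uniqueness of the Birkhoff splitting, together with the normalization $I + \mathcal O(\lambda^{-1})$ of $F_{2,-}$, then forces
$${F}_{2,-}(z,\bar z,\lambda) = \mathring{g}_-(\lambda)\bigl(F_{0,-}(z,\bar z,\lambda)\sharp\mathring{g}_+(\lambda)^{-1}\bigr)_-,$$
which is the second displayed identity.

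The routine bookkeeping — checking that each accumulated factor really lies in the correct loop subgroup, and that the $\sigma$-twisting is respected throughout (using Lemma~\ref{relation of loop groups} and its $\Lambda^\pm$ analogues, noting that in this section the realization $G/K_0$ is fixed so the twisting is by a single $\sigma$) — I would dispatch with a sentence. The one genuinely delicate point, and the place I would spend the most care, is the interplay between the two decompositions at stake: (\ref{longequ}) comes from an Iwasawa-type (real) splitting of ${F_2}$, whereas the Birkhoff splitting $\mathring{g} = \mathring{g}_-\mathring{g}_+$ is a complex one, and one must make sure these are compatible on the common domain of definition (away from the discrete set where Birkhoff factorization fails and away from the poles of the normalized frames). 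Concretely, the subtlety is that $(\mathring{g}(\lambda)F_{0,-}(z,\bar z,\lambda))_-$ — the object appearing before the proposition — must be shown to equal $\mathring{g}_-(\lambda)$ times the normalized negative part of the conjugate; this is where the non-uniqueness of $\mathring{g}$ (it is only determined up to $K_0$ on the right and up to the stabilizer of $f(z_2)$ on the left, per the remark after~(\ref{defmathring})) has to be absorbed into the $\Lambda^+$-factor $Q_+$ without spoiling the normalization of $F_{2,-}$. Once that compatibility is in hand the proposition follows purely formally.
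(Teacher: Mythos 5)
Your argument is correct and is essentially the paper's own (very terse) proof: the first equality is equation (\ref{longequ}) with $\mathring{g}=\mathring{g}_-\mathring{g}_+$ substituted and $\mathring{g}_+(\lambda)^{-1}\mathring{g}_+(\lambda)$ inserted, and the second follows because $Q_+\in\Lambda^+G^{\mathbb C}_{\sigma}$ and the normalized Birkhoff splitting is unique. The only quibble is terminological: the splittings $F_0=F_{0,-}F_{0,+}$ and $F_2=F_{2,-}F_{2,+}$ behind (\ref{longequ}) are Birkhoff (complex) decompositions rather than Iwasawa (real) ones, so the compatibility concern in your final paragraph does not actually arise and everything reduces to uniqueness on the big Birkhoff cell.
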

 \begin{proof}
 The first equality just gives an interpretation of (\ref{longequ}). From this   the claim follows.
 \end{proof}

\begin{remark}
The last proposition can also be stated for $\mathring{g}$ which have a general Birkhoff decomposition. 
But the result looks more complicated than the one above. Independent of this, the case discussed above 
is the generically occurring one.

\end{remark}

\begin{corollary}
The normalized potential of ${f_2}$ is equal to the Maurer-Cartan form of 

$\mathring{g}_-(\lambda)( F_{0,-}(z, \bar z, \lambda) \sharp \mathring{g}_+(\lambda)^{-1})_-,$
where $\mathring{g}(\lambda)  = \mathring{g}_+(\lambda)  \mathring{g}_-(\lambda) \in G$.
\end{corollary}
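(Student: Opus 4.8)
The plan is to deduce the corollary directly from the preceding proposition. The proposition already identifies
$$F_{2,-}(z,\bar z,\lambda)=\mathring{g}_-(\lambda)\,\bigl(F_{0,-}(z,\bar z,\lambda)\,\sharp\,\mathring{g}_+(\lambda)^{-1}\bigr)_-,$$
so the only thing left to observe is that the normalized potential of $f_2$ is by definition the Maurer-Cartan form of its normalized extended frame $F_{2,-}$. Hence I would simply substitute the formula for $F_{2,-}$ from the proposition into the definition $\eta_{2,-}=F_{2,-}^{-1}\,dF_{2,-}$ and record the result. No new computation is required beyond invoking the proposition.

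The one point worth spelling out is the consistency of the notation: in the proposition $\mathring{g}(\lambda)=\mathring{g}_-(\lambda)\mathring{g}_+(\lambda)$, whereas in the corollary it is written $\mathring{g}(\lambda)=\mathring{g}_+(\lambda)\mathring{g}_-(\lambda)$. I would remark that these are two Birkhoff factorizations (a "$-+$" one and a "$+-$" one), both of which exist generically, and that the corollary uses the ordering that is natural here; alternatively one can simply rename the factors so that the statement matches the proposition verbatim. In either reading the content is the same: $\eta_{2,-}$ depends on $F_{0,-}$ only through the dressed object $F_{0,-}\,\sharp\,\mathring{g}_+(\lambda)^{-1}$, multiplied on the left by the $z$-independent, $\lambda$-dependent factor $\mathring{g}_-(\lambda)$, which drops out upon forming the Maurer-Cartan form except through its effect on the "$(\cdot)_-$" projection.

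The proof itself is therefore essentially a one-line citation of the proposition, and I would write it as such:
\begin{proof}
By Definition of the normalized potential, $\eta_{2,-}$ is the Maurer-Cartan form $F_{2,-}^{-1}\,dF_{2,-}$ of the normalized extended frame $F_{2,-}$ of $f_2$. The preceding proposition gives
$$F_{2,-}(z,\bar z,\lambda)=\mathring{g}_-(\lambda)\,\bigl(F_{0,-}(z,\bar z,\lambda)\,\sharp\,\mathring{g}_+(\lambda)^{-1}\bigr)_-,$$
so $\eta_{2,-}$ is the Maurer-Cartan form of the right-hand side, as claimed.
\end{proof}

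The only genuine obstacle, and it is a bookkeeping one rather than a mathematical one, is to make sure the Birkhoff-factorization conventions in the proposition and in the corollary are aligned (which factor carries the subscript $+$ and which the $-$, and on which side the splitting is normalized), since the two displayed formulas for $\mathring{g}(\lambda)$ as written are not literally identical. I would resolve this once and for all at the start of the subsection by fixing the convention and then reuse it, so that the corollary follows with no further argument.
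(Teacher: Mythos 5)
Your proof is correct and matches the paper's (implicit) argument: the corollary is an immediate consequence of the preceding proposition together with the definition of the normalized potential as the Maurer--Cartan form of the normalized extended frame $F_{2,-}$. Your observation that the factorization order $\mathring{g}=\mathring{g}_+\mathring{g}_-$ in the corollary versus $\mathring{g}=\mathring{g}_-\mathring{g}_+$ in the proposition is a notational slip to be reconciled is also the right reading.
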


 \subsection{ From normalized potentials to the harmonic maps relative to  different base points.}
 
 It is fairly easy to reverse the argument of the last section.
 
 \begin{proposition}
 Let $f_0, {f_2}: \D \rightarrow G/K_0 $ be harmonic maps which for $z_0,z_2 \in \D$ satisfy  
 $f_0(z_0) = {f_2}(z_2) = eK_0$ and 
 whose extended frames we denote by $F_0$ and ${F_2}$.
 We assume $F_0(z_0, \overline{z_0}, \lambda) = I$ and ${F_2}(z_2, \overline{z_2}, \lambda) = I$
 We assume moreover that
 
 $\bullet$ there exists some $\mathring{g}(\lambda)  \in  \Lambda G^\C_{\sigma}$ 
 for which we have the  Birkhoff decomposition 
 
  $\mathring{g}(\lambda)  = \mathring{g}_-(\lambda)  \mathring{g}_+(\lambda) \in G$,
  
  $\bullet$ the normalized frame of ${f_2}$ is equal to
  $\mathring{g}_-(\lambda)( F_{0,-}(z, \bar z, \lambda)  \sharp \mathring{g}_+(\lambda)^{-1})_-,$
  where $\sharp$ denotes "dressing" and the last term is of the form  $I + \mathcal{O}(\lambda^{-1})$.
  Then for the extended frames we obtain the relation
  $$  {F_2}(z, \bar z,\lambda)  = \mathring{g}(\lambda) F_0(z, \bar z, \lambda) k_0(z, \bar z).$$
  
  And for the associated families we obtain
  
  $${f_2}(z, \bar z, \lambda) =  \mathring{g}(\lambda) f_0(z, \bar z, \lambda).$$
  
 Furthermore, $ \mathring{g}(\lambda =1) f(z_0)  = f(z_2).$
  
 \end{proposition}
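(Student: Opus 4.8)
The plan is to reverse-engineer the forward construction of the previous two subsections: there we started from the relation ${F_2} = \mathring{g}(\lambda) F_0 k_0$ and derived the dressing formula for the normalized frame, and now we want to start from the dressing formula and recover the frame relation. First I would unwind the hypothesis on the normalized frame. By assumption ${F_2}$ is the extended frame of ${f_2}$ normalized at $z_2$, so it admits a (unique) Birkhoff splitting ${F_2}(z,\bar z,\lambda) = {F}_{2,-}(z,\lambda){F}_{2,+}(z,\bar z,\lambda)$ with ${F}_{2,-}(z,\lambda) = I + \mathcal O(\lambda^{-1})$, and the hypothesis identifies ${F}_{2,-}$ with $\mathring{g}_-(\lambda)\bigl(F_{0,-}(z,\bar z,\lambda)\sharp\mathring{g}_+(\lambda)^{-1}\bigr)_-$. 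Expanding the dressing $\sharp$ via its defining Birkhoff splitting, there is a positive loop $Q_+(z,\bar z,\lambda) \in \Lambda^+ G^\C_\sigma$ with $\mathring{g}_+(\lambda)F_{0,-}(z,\bar z,\lambda) = \bigl(F_{0,-}\sharp\mathring{g}_+^{-1}\bigr)_-\,Q_+(z,\bar z,\lambda)$, so that $\mathring{g}(\lambda)F_{0,-}(z,\bar z,\lambda) = \mathring{g}_-(\lambda)\mathring{g}_+(\lambda)F_{0,-} = {F}_{2,-}(z,\lambda)\,Q_+(z,\bar z,\lambda)$.

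Next I would multiply on the right by $F_{0,+}(z,\bar z,\lambda)$: from $F_0 = F_{0,-}F_{0,+}$ we get $\mathring{g}(\lambda)F_0(z,\bar z,\lambda) = {F}_{2,-}(z,\lambda)\,Q_+(z,\bar z,\lambda)F_{0,+}(z,\bar z,\lambda)$. The key structural point is now a uniqueness argument: both sides, after inserting the appropriate $K_0$-valued correction, are Iwasawa-type factorizations in $\Lambda G^\C_\sigma = \Lambda G_\sigma \cdot \Lambda^+ G^\C_\sigma$. Concretely, the left side $\mathring{g}(\lambda)F_0(z,\bar z,\lambda)$ lies in $\Lambda G^\C_\sigma$ (product of $\mathring{g}(\lambda) \in \Lambda G^\C_\sigma$ with the extended frame $F_0 \in \Lambda G_{\sigma}$), and the right side exhibits it as (element of $\Lambda G_\sigma$, namely ${F}_{2,-}Q_+^{\mathrm{comp}}$, up to a $K_0$-factor) times (element of $\Lambda^+ G^\C_\sigma$). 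I would then invoke the uniqueness of the Iwasawa / Birkhoff decompositions already used in the Propositions above to conclude that the $\Lambda G_\sigma$-part of $\mathring{g}(\lambda)F_0(z,\bar z,\lambda)$ must coincide with the extended frame ${F_2}(z,\bar z,\lambda)$ normalized at $z_2$, up to right multiplication by a $K_0$-valued factor $k_0(z,\bar z)$; this is exactly the claimed identity ${F_2}(z,\bar z,\lambda) = \mathring{g}(\lambda)F_0(z,\bar z,\lambda)k_0(z,\bar z)$. Evaluating at $z = z_2$ fixes the normalization and is consistent because ${F_2}(z_2,\overline{z_2},\lambda) = I$.

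Having the frame relation, the statement about the associated families is immediate: applying $\operatorname{mod} K_0$ to ${F_2}(z,\bar z,\lambda) = \mathring{g}(\lambda)F_0(z,\bar z,\lambda)k_0(z,\bar z)$ and using that $k_0$ takes values in $K_0$ gives ${f_2}(z,\bar z,\lambda) = {F_2}(z,\bar z,\lambda).eK_0 = \mathring{g}(\lambda)F_0(z,\bar z,\lambda).eK_0 = \mathring{g}(\lambda)f_0(z,\bar z,\lambda)$. Finally, specializing $\lambda = 1$ and using $f_0(z,\bar z,1) = f_0$, $f_0(z_0) = eK_0$, together with ${f_2}(z_2) = eK_0$ evaluated from ${f_2}(z,\bar z,1) = \mathring{g}(1)f_0(z,\bar z)$ at $z = z_0$ (where $f_0(z_0) = f(z_0)$), yields $\mathring{g}(\lambda=1).f(z_0) = f(z_2)$.

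I expect the main obstacle to be making the uniqueness step fully rigorous: one must be careful that the "dressing splitting" defining $\sharp$ and the Birkhoff splitting of ${F_2}$ are compatible, i.e. that the positive loop $Q_+$ produced by dressing, multiplied by $F_{0,+}$ and absorbed together with the $K_0$-ambiguity, genuinely lands in $\Lambda^+ G^\C_\sigma$ and not merely in $\Lambda^+ G^\C$ without the twisting — this is where the $\sigma$-equivariance of all the factors ($\mathring{g}(\lambda) \in \Lambda G_\sigma$, $F_0 \in \Lambda G_\sigma$, hence also the splitting factors) has to be tracked. Once the twisting is in place, the argument is just the usual "the extended frame normalized at a base point is uniquely determined" principle already used twice in this section, run backwards; the $z_2$-normalization ${F}_{2,-}(z_2,\lambda)=I$ is what pins down $\mathring{g}(\lambda)$ and removes the remaining freedom.
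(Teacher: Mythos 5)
Your proposal is correct and follows essentially the same route as the paper's own proof: you unwind the dressing hypothesis to get $\mathring{g}(\lambda)F_{0}=F_{2,-}\cdot(\text{positive loop})$, recombine with the Birkhoff factors to obtain $F_2=\mathring{g}F_0W_+$ with $W_+\in\Lambda^+G^\C_\sigma$, and then use reality and twisting to force $W_+=k_0\in K_0$ (the paper states this as $W_+\in\Lambda G_\sigma\cap\Lambda^+G^\C_\sigma=K_0$, you phrase it as uniqueness of the Iwasawa splitting, which is the same fact, and it indeed requires $\mathring{g}\in\Lambda G_\sigma$ rather than merely $\Lambda G^\C_\sigma$, a point the paper's proof also uses tacitly). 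The subsequent projection mod $K_0$ and the evaluation at the base point for the final claim coincide with the paper's argument.
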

 
 \begin{proof}
 By assumption we have ${F_2}_- (z, \bar z, \lambda) = \mathring{g}_-(\lambda)( F_0(z, \bar z, \lambda)_-  \sharp 
 \mathring{g}_+(\lambda)^{-1})_- ,$ whence we obtain
\begin{small}
\begin{equation*} 
\begin{split}
&{F_2} (z, \bar z, \lambda)=\\
& \mathring{g}_-(\lambda)(  \mathring{g}_+(\lambda)F_{0,-}(z, \bar z, \lambda) 
 \mathring{g}_+(\lambda)^{-1})_- \cdot \\
 &(  \mathring{g}_+(\lambda)F_{0,-}(z, \bar z, \lambda)  
 +\mathring{g}_+(\lambda)^{-1})_+
 \cdot 
 ((  \mathring{g}_+(\lambda)F_0(z, \bar z, \lambda)_-  \mathring{g}_+(\lambda)^{-1})_+^{-1} {F}_+^{-1}(z, \bar z, \lambda)
 = \\
 & \mathring{g}_-(\lambda)(  \mathring{g}_+(\lambda)F_{0,-}(z, \bar z, \lambda) V_+ = 
 \mathring{g}F_0 W_+.
 \end{split}
 \end{equation*}
 \end{small}
 
 Since $F_0, {F_2}$ and $\mathring{g}$ are contained in $\Lambda G_\sigma,$ we conclude $W+ \in K_0$.
 This proves the first claim. 
 
 The second claim follows by application to the point $eK_0 \in G/K_0.$
 
 The last claim is a consequence of the second one obtained by evaluation at $z_2$.
  \end{proof}
 \subsection{The relation between the harmonic maps into the compact duals}
 
 In this section we will use the notation of the previous sections freely.
 
 We consider, in particular, again a harmonic map $f_0:\D \rightarrow G/K_0$ and two base points $z_0$ and $z_2$.
 
 We choose again $\mathring{g} \in G$ such that  $\mathring{g}.f_0(z_0) = f_2(z_2). $ holds and define 
 ${f_2} = \mathring{g} f_0.$ Then the extended frames ${F_2}$ and $F_0$ satisfy
 
 \begin{equation} \label{relationframes}
  {F_2}(z, \bar z,\lambda)  = \mathring{g}(\lambda) F_0(z, \bar z, \lambda) k_0(z, \bar z).
  \end{equation}
   
   Recall that $F_0$ attains the value $I$ at $z = z_0 \in \D$, while ${F_2}$ attains the value $I$ at $z = z_2$.
   
   Next we want to determine the dual harmonic maps into the compact dual of $G/K_0$.
   Note, in general there may be different compact duals, but we have a specific procedure to determine them, whence we only have one compact dual for both harmonic maps, namely the one described in Theorem 1.1
   \cite{DoWa:Dual}
   
   In the spirit of the procedure outlined in the previous sections above we consider the global 
   Iwasawa splittings relative to the maximal compact subgroup $U$ defined in  
    section 3.1 of \cite{DoWa:Dual} in the construction of the compact dual symmetric space.
   
   \begin{enumerate}
   \item $F_0= F_{0,U} V_{0,+},$\\
   \item ${F_2} = {F}_{2,U} {V}_{2,+}.$
   \end{enumerate}
   
 In view of (\ref{relationframes}) this yields

 \begin{equation} \label{relatedualframes}
 F_{2,U} = \mathring{g} F_{0,U} W_+,
 \end{equation}
  with some $W_+ \in \Lambda^+G^\C_\sigma$.
  
  This can be rephrased as follows:
  
  \begin{proposition} Let $f_2(z_2) = \mathring{g}.f_0(z_0) , $ with  $ \mathring{g} \in G$ as in  
  (\ref{defmathring}) above.
  Then the dual frame $ {F}_{2,U} $ of ${F_2}$ can be obtained from  the dual frame
  $ F_{0,U} $ of $F_0$ by dressing by  $ \mathring{g}_+.$
  \end{proposition}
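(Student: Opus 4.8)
The plan is to start from the relation \eqref{relatedualframes}, namely $F_{2,U} = \mathring{g}\, F_{0,U}\, W_+$ with $W_+ \in \Lambda^+ G^\C_\sigma$, and to transform it into a genuine dressing statement by passing $\mathring{g}$ through $F_{0,U}$ in the appropriate way. First I would take the Birkhoff decomposition $\mathring{g}(\lambda) = \mathring{g}_-(\lambda)\,\mathring{g}_+(\lambda)$ (the generic case, as in the previous subsections) and substitute it, obtaining
\[
F_{2,U} = \mathring{g}_-(\lambda)\,\mathring{g}_+(\lambda)\, F_{0,U}\, W_+
= \mathring{g}_-(\lambda)\,\bigl(\mathring{g}_+(\lambda)\, F_{0,U}\, \mathring{g}_+(\lambda)^{-1}\bigr)\,\mathring{g}_+(\lambda)\, W_+ .
\]
The middle factor $\mathring{g}_+(\lambda)\, F_{0,U}\, \mathring{g}_+(\lambda)^{-1}$ is precisely the dressing of $F_{0,U}$ by $\mathring{g}_+$, and the remaining factor $\mathring{g}_+(\lambda)\, W_+$ lies in $\Lambda^+ G^\C_\sigma$. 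So after dressing, the product $\mathring{g}_-(\lambda)\,(F_{0,U} \sharp \mathring{g}_+(\lambda)^{-1})$, followed by an element of $\Lambda^+$, reproduces $F_{2,U}$.

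\textbf{Key steps.} The argument then consists of: (1) recalling from Theorem~1.1 of \cite{DoWa:Dual} (and the construction in its Section~3.1) that the dual frame is characterized by the global Iwasawa-type splitting relative to the maximal compact subgroup $U$, so that the $U$-part is uniquely determined once one normalizes the $\Lambda^+$-factor; (2) observing that the rearranged identity exhibits $F_{2,U}$ as (element of $U$-factor group)$\times$($\Lambda^+$-factor), with the $U$-factor being $\mathring{g}_-(\lambda)$ times the dressed frame; (3) invoking uniqueness of the relevant splitting to conclude that the dressed object $F_{0,U} \sharp \mathring{g}_+(\lambda)^{-1}$, after left-multiplication by $\mathring{g}_-(\lambda)$ and suitable normalization, \emph{is} $F_{2,U}$. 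I would phrase the conclusion exactly as in the earlier Proposition on normalized potentials, i.e. that $F_{2,U}$ is obtained from $F_{0,U}$ by dressing by $\mathring{g}_+$ (up to the unavoidable $\mathring{g}_-$ prefactor and a right $\Lambda^+$-gauge, which one absorbs in the normalization of the dual frame).

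\textbf{Main obstacle.} The delicate point is the compatibility of the dressing operation with the \emph{compact} Iwasawa splitting $F = F_U V_+$ used in the construction of the compact dual, as opposed to the non-compact Iwasawa splitting used for the harmonic map itself: one must check that $\mathring{g}_+(\lambda) \in \Lambda^+ G^\C_\sigma$ really does preserve the class of $\Lambda^+$-factors appearing in this splitting, and that the leading-term normalization (positive diagonal entries) can be restored after multiplying by $\mathring{g}_+(\lambda)$. I expect this to amount to the standard fact that $\Lambda^+ G^\C_\sigma$ is a group and that dressing by a $\Lambda^+$-element maps frames to frames, together with the observation that $\mathring{g}$ is real (hence $\mathring{g}_\pm$ interact well with $\tau$ and $\theta$), so that the $U$-reduction is respected; I would state this as a lemma or simply cite the corresponding compatibility already used implicitly in \cite{DoWa:Dual} and in the preceding subsection. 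The rest is the bookkeeping of absorbing $\mathring{g}_-$ and the residual $\Lambda^+$-terms, which is routine.
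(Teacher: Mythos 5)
Your starting point, equation (\ref{relatedualframes}), and your overall strategy of reading the proposition off from it via uniqueness of a splitting is exactly the route the paper takes (the paper offers no separate proof: it derives (\ref{relatedualframes}) from (\ref{relationframes}) and the two $U$-Iwasawa splittings and then calls the proposition a rephrasing). The genuine gap lies in your step (3), where you import the Birkhoff-based manipulation from the normalized-potential proposition. In that earlier situation the identity $F_{2,-}=\mathring{g}_-\,(\mathring{g}_+F_{0,-}\mathring{g}_+^{-1})\,Q_+$ closes because both $\mathring{g}_-$ and the dressed factor lie in $\Lambda^{-}_{*}G^{\C}_{\sigma}$, which is a group, so uniqueness of the Birkhoff splitting identifies $\mathring{g}_-\,(F_{0,-}\sharp\mathring{g}_+^{-1})_-$ with $F_{2,-}$. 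In the compact-Iwasawa setting the analogous left factor $\mathring{g}_-$ does \emph{not} take values in the loop group associated to $U$ (generically $\Lambda^{-}_{*}G^{\C}_{\sigma}$ meets it only in the identity), so the product $\mathring{g}_-\,(F_{0,U}\sharp\mathring{g}_+^{-1})$ is not a $U$-valued dual frame, uniqueness of the splitting $F=F_UV_+$ cannot be invoked to identify it with $F_{2,U}$, and the $\mathring{g}_-$ prefactor can be absorbed neither into the normalization of the dual frame nor into the right $\Lambda^+$-factor. The obstacle you single out as the delicate point (that $\Lambda^+G^{\C}_{\sigma}$ is a group and the leading-term normalization can be restored) is indeed routine; the real issue is the stranded left factor.

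The repair is to not Birkhoff-decompose $\mathring{g}$ at all: rewrite (\ref{relatedualframes}) as $\mathring{g}(\lambda)\,F_{0,U}=F_{2,U}\,W_+^{-1}$ and observe that the right-hand side is already a ($U$-part)$\cdot$($\Lambda^+$-part) factorization. Since the Iwasawa splitting relative to $U$ is global and unique up to the usual gauge freedom in the $+$-factor, $F_{2,U}$ is the $U$-part of $\mathring{g}(\lambda)F_{0,U}$, i.e.\ it is obtained from $F_{0,U}$ by dressing with the whole loop $\mathring{g}(\lambda)$, up to a gauge. If one wants to exhibit a ``$+$''-element as in the statement of the proposition, the correct factorization of $\mathring{g}$ is the Iwasawa one, $\mathring{g}=\mathring{g}_U\,\mathring{g}_+$ with $\mathring{g}_U$ taking values in the $U$-loop group, which yields $F_{2,U}=\mathring{g}_U\,(\mathring{g}_+\sharp F_{0,U})$ up to gauge; here $\mathring{g}_U$ acts only by left translation, so the essential content is dressing by the $+$-factor, as claimed.
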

  
  \begin{corollary}
  From the two Iwasawa splittings stated just above we derive that $F_0$ and $F_{0,U}$ as well as 
  ${F_2}$  and  ${F}_{2,U}$ are uniquely determined from each other. Therefore, $F_0 =  {F_2}$
  if and only if $F _{0,U}=  {F}_{2,U}.$ 
  \end{corollary}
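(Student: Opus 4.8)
The plan is to route both equalities through the canonical bijections that, for each base point, tie the real extended frame to its compact-dual frame, the link in each case being the common normalized potential.

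First I would dispose of the implication $F_0=F_2\Rightarrow F_{0,U}=F_{2,U}$. Assume $F_0=F_2$ as (meromorphic) $\Lambda G_{\sigma}$-valued maps on $\D$. Applying to both of them the global Iwasawa splitting relative to $U$ appearing in the two displayed equations, and using that this splitting is unique once the leading coefficient of the $\Lambda^{+}G^{\C}_{\sigma}$-factor is normalized to have positive diagonal entries, the $U$-factors must coincide; by those two Iwasawa splittings this reads $F_{0,U}=F_{2,U}$ (and, simultaneously, $V_{0,+}=V_{2,+}$). This direction uses nothing beyond uniqueness of the Iwasawa decomposition.

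For the converse I would first make precise the assertion that $F_0$ and $F_{0,U}$ are recoverable from one another, and likewise $F_2$ and $F_{2,U}$. Here I would invoke the construction of the compact dual in \cite{DoWa:Dual}: the dual harmonic map attached to $f_0$, with extended frame $F_{0,U}$ normalized by $F_{0,U}(z_0,\overline{z_0},\lambda)=I$, has the same normalized potential $\eta_{0,-}$ --- equivalently the same normalized frame $F_{0,-}$ --- as $f_0$ itself. Consequently, Birkhoff-splitting either $F_0$ or $F_{0,U}$ and normalizing the $\Lambda^{-}G^{\C}_{\sigma}$-factor to be $I+\mathcal{O}(\lambda^{-1})$ produces the very same object $F_{0,-}$; conversely $F_{0,-}$ returns $F_0$ through the Iwasawa splitting relative to $G$ and returns $F_{0,U}$ through the Iwasawa splitting relative to $U$. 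This furnishes a chain of bijections $F_0\leftrightarrow F_{0,-}\leftrightarrow F_{0,U}$, all compatible with the base point $z_0$, and the analogous chain $F_2\leftrightarrow F_{2,-}\leftrightarrow F_{2,U}$ compatible with $z_2$, everything read in the meromorphic category away from the relevant discrete singular sets. Combining the two chains, $F_{0,U}=F_{2,U}$ forces $F_{0,-}=F_{2,-}$ and hence $F_0=F_2$, which together with the first implication gives the stated equivalence.

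I expect the one genuine obstacle to be the reconstruction step $F_{0,U}\rightsquigarrow F_0$: the single Iwasawa identity $F_0=F_{0,U}V_{0,+}$ does not by itself express $F_0$ in terms of $F_{0,U}$, so one really must use the fact from \cite{DoWa:Dual} that a harmonic map and its compact dual share the normalized potential, which is what identifies the $\Lambda^{-}G^{\C}_{\sigma}$-factor of $F_{0,U}$ with $F_{0,-}$. Beyond that, the argument is bookkeeping with the normalization conventions --- value $I$ at the base point and positive diagonal leading term for the $\Lambda^{+}$-factors --- which is precisely what makes each splitting in the two chains unique and should be cited at every use.
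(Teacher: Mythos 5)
Your proposal is correct in substance and agrees with the paper for the implication $F_0=F_2\Rightarrow F_{0,U}=F_{2,U}$, but for the converse it takes a genuinely different route, and the step you single out as ``the one genuine obstacle'' is not actually an obstacle in the paper's argument. The corollary is meant to follow from the two displayed splittings alone: rewriting $F_0=F_{0,U}V_{0,+}$ as $F_{0,U}=F_0\cdot V_{0,+}^{-1}$ with $V_{0,+}^{-1}\in\Lambda^{+}G^{\mathbb{C}}_{\sigma}$ exhibits (after the usual adjustment inside $K^{\mathbb{C}}=KB$) the Iwasawa splitting of $F_{0,U}$ relative to the noncompact real form $G$, so uniqueness of that splitting recovers $F_0$ from $F_{0,U}$ exactly as uniqueness of the splitting relative to $U$ recovers $F_{0,U}$ from $F_0$; the equivalence is then immediate, with no Birkhoff factorization needed. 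Your detour is nonetheless valid: from $F_0=F_{0,U}V_{0,+}$ and $F_0=F_{0,-}L_{0,+}$ one has $F_{0,U}=F_{0,-}\bigl(L_{0,+}V_{0,+}^{-1}\bigr)$, so $F_{0,U}$ and $F_0$ indeed share the normalized frame $F_{0,-}$ (the fact you cite from \cite{DoWa:Dual}), and the chain $F_0\leftrightarrow F_{0,-}\leftrightarrow F_{0,U}$, together with its analogue at $z_2$, gives the claim. What your route buys is an explicit link to the normalized potential and the machinery of Section 1; what it costs is the big-cell/meromorphic caveats attached to the Birkhoff splitting, which the direct two-Iwasawa argument avoids. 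Finally, note that neither route escapes the one genuine subtlety, which the paper also passes over silently: each factorization determines its factors only after fixing the $K$-valued (respectively $U\cap K^{\mathbb{C}}$-valued) gauge, e.g. via the $\Lambda^{+}_{B}$-normalization of the appendix or the positive-diagonal convention of Section 1 together with the base-point conditions $F_0(z_0,\overline{z_0},\lambda)=I$, $F_2(z_2,\overline{z_2},\lambda)=I$; your closing bookkeeping remark covers this, so your argument stands at the same level of rigor as the text.
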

  
  \begin{remark}
  Now  (\ref{relationframes}) implies that this holds if and only
   if $\mathring{g}$ is in the isotropy group of the dressing action at the point $F_0$ resp. $F_{0,U}$.
  It is known that in many cases this isotropy group is fairly small, sometimes only $\{\pm I\}$.
   In general, however, the dressing isotropy group is fairly big. As a consequence, 
   one should expect that $F_0$ and ${F_2}$ are generally different.
  An example for this is presented in section 6 of \cite{DoWa:Dual}. 
  \end{remark}
 
\subsection{The case $\D = S^2$}

We follow the loop group approach for harmonic maps defined on $S^2$ and consider two base points 
$z_0 \neq z_2$ and also choose two points $p_0 \neq p_2$  in $S^2$ satisfying 
$\{z_0, z_2 \} \cap \{z_0, z_2 \} = \emptyset.$ Now we proceed as in Section \ref{sectionS^2}.
We leave the details to the reader.


\section{Appendix}

For the convenience of the reader we list here the basic notation and the basic results about the loop group method.
More information about  the loop group method 
can be found in \cite{DPW}, \cite{DoHa:sym1} , \cite{DoHa:sym2} \cite{DoWa:sym1}.

Denote the interior of the unit disk by $\mathbf{D}=\{\lambda\in \mathbb{C}| |\lambda|<1\}$ and by $\E$  the exterior of the unit disk, 
$\E=\{\lambda\in \mathbb{C}| |\lambda|>1\} \cup {\infty}.$

Since we are primarily interested in groups and loop groups related to $G^\C$, we  write down the conditions below for simply-connected complex (matrix) Lie groups only.

Set 
\begin{align*}
\Lambda G_{\sigma}^{\mathbb C}&=\{ g: S^1\rightarrow G^{\mathbb C} | g
\text{  has finite Wiener norm},\, g(\epsilon \lambda)=\sigma (g(\lambda))\}, \\
\Lambda^{+} G^{\mathbb C}_{\sigma}&=\{g\in \Lambda G^{\mathbb{C}}_{\sigma} | \, g \text{ extends holomorphically to } \mathbf{D}, g(0)\in K^{\mathbb C}\},\\
\Lambda^{+}_B G_{\sigma}^{\mathbb C}&=\{ g\in \Lambda^{+} G_{\sigma}^{\mathbb C} |\, 
g(0)\in B \},\\
\Lambda^{-} G^{\mathbb C}_{\sigma}&=\{ g\in \Lambda G^{\mathbb{C}}_{\sigma} | \, g \text{ extends holomorphically to } \mathbb{E}, g(\infty)\in K^{\mathbb C}\},\\
\Lambda^{-}_{*} G^{\mathbb C}_{\sigma}&=\{ g\in \Lambda^{-} G^{\mathbb C}_{\sigma} | \, g(\infty)=e\},
\end{align*}
where $K^{\mathbb C} = KB$ is a fixed Iwasawa decomposition of $K^{\mathbb C}$.

We will always equip $\Lambda G_{\sigma}^{\mathbb C}$ with 
the Wiener topology of absolute convergence of the Fourier coefficients. Then the group
$\Lambda G^{\mathbb C}_{\sigma}$ becomes a complex Banach Lie group  with Lie algebra
\begin{equation*}
\Lambda \mathfrak{g}^{\mathbb{C}}_{\sigma}:=\{\xi: S^1 \rightarrow \mathfrak{g}^{\mathbb C}|
 \sigma(\xi(\lambda))=\xi(\epsilon \lambda)\}.
\end{equation*}

If $\xi \in \Lambda \mathfrak{g}^{\mathbb C}_{\sigma}$, its Fourier decomposition is
$$\xi=\sum_{l \in \mathbb{Z}} \lambda^l\xi_l, \quad \xi_l \in \mathfrak{g}_l$$
and the Lie subalgebras of $\Lambda \mathfrak{g}^{\mathbb C}_{\sigma}$ corresponding to the subgroups
$\Lambda G_{\sigma}$, $\Lambda^{+} G^{\mathbb C}_{\sigma}$ and $\Lambda^{-} G^{\mathbb C}_{\sigma}$ are
\begin{align*}
\Lambda \mathfrak{g}_{\sigma}&=\Lambda \mathfrak{g}^{\mathbb C}_{\sigma}\cap \mathfrak{g},
\\
\Lambda^{+}\mathfrak{g}^{\mathbb C}_{\sigma}&=\{\xi\in \Lambda \mathfrak{g}^{\mathbb C}_{\sigma} |
\xi_l=0 \text{ for } l<0, \xi_0\in \mathfrak{k}\},\\
\Lambda^{-}\mathfrak{g}^{\mathbb C}_{\sigma}&=\{\xi\in \Lambda \mathfrak{g}^{\mathbb C}_{\sigma} |
\xi_l=0 \text{ for } l>0, \xi_0\in \mathfrak{k}\}.
\end{align*}
Similar conditions hold for the remaining two Lie algebras.

We finish this section by quoting the two splitting theorems which are of 
crucial importance for the application of the loop group method.
The first of these theorems is due to Birkhoff, who invented it for the loop group of $GL(n,\C)$ in an attempt to solve Hilbert's 21'{st} problem. 

\begin{theorem}[Birkhoff Decomposition]
Let $G$ be a simply connected  real Lie group. Then the multiplication $\Lambda^{-}_{*} G^{\mathbb C}_{\sigma} \times \Lambda^{+} G^{\mathbb C}_{\sigma} \rightarrow \Lambda G^{\mathbb C}_{\sigma}$ is a 
complex analytic diffeomorphism onto the open, connected  and dense subset 
$\Lambda^{-}_{*} G^{\mathbb C}_{\sigma}\cdot \Lambda^{+} G^{\mathbb C}_{\sigma}$ of  $\Lambda G^{\mathbb C}_{\sigma},$
called the big (left)  Birkhoff cell. 

In particular, if $g\in \Lambda G^{\mathbb C}_{\sigma}$ is contained in the big cell,  then
$g$ has a unique decomposition $g=g_{-}g_{+}$, where $g_{-}\in \Lambda_{*}^{-} G_{\sigma}^{\mathbb C}$ 
and $g_{+}\in \Lambda^{+}G^{\mathbb C}_{\sigma}$.
\end{theorem}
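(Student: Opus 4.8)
The plan is to reduce the statement to the classical untwisted Birkhoff factorization for $GL(n,\C)$ and then recover the twisted, general-$G$ version by uniqueness and equivariance. First I would fix a faithful holomorphic representation $G^\C\hookrightarrow GL(n,\C)$, available since we work with matrix groups and $G$ (hence $G^\C$) is simply connected; this embeds $\Lambda G^\C_\sigma$ into $\Lambda GL(n,\C)$ compatibly with the three subgroups in play. For an untwisted loop $g\colon S^1\to GL(n,\C)$ of finite Wiener norm, the classical factorization gives $g=g_-\cdot\mathrm{diag}(\lambda^{k_1},\dots,\lambda^{k_n})\cdot g_+$ with $g_-,g_+$ extending holomorphically and invertibly to $\E$ resp.\ $\mathbf{D}$. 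I would obtain this by the Riemann--Hilbert / Birkhoff--Grothendieck route: the clutching function $g$ glues the trivial bundles over $\mathbf{D}$ and over $\E$ into a holomorphic vector bundle $E_g$ on $\mathbb{P}^1=\mathbf{D}\cup\E$; Birkhoff--Grothendieck writes $E_g\cong\bigoplus_i\mathcal{O}(k_i)$, and the integers $k_i$ together with the two trivializing frames are read off the splitting. By definition $g$ lies in the big cell exactly when all $k_i=0$, i.e.\ when $E_g$ is holomorphically trivial, in which case the two frames, normalized so that the $\E$-frame takes the value $e$ at $\infty$, yield $g_-\in\Lambda^-_* G^\C_\sigma$ and $g_+\in\Lambda^+ G^\C_\sigma$ with $g=g_-g_+$.

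Next I would establish that the big cell is open, dense and connected in $\Lambda G^\C_\sigma$. Openness: holomorphic triviality of $E_g$ is equivalent to the invertibility of an associated Toeplitz/Cauchy operator depending continuously on $g$ in the Wiener topology, and invertibility is an open condition; equivalently, $\dim H^1(\mathbb{P}^1,E_g(-1))$ is upper semicontinuous in $g$. Density: the complement of the big cell is a countable union of lower Birkhoff strata, each indexed by a nonzero jumping type $\mathbf{k}$ and each a locally closed submanifold of strictly positive finite codimension, hence containing no open set; alternatively one perturbs any clutching function slightly to kill all negative twists. Connectedness: once the diffeomorphism with $\Lambda^-_* G^\C_\sigma\times\Lambda^+ G^\C_\sigma$ is in hand, both factors are connected — indeed $\Lambda^-_* G^\C_\sigma$ contracts to $e$ via $g_-(\lambda)\mapsto g_-(\lambda/t)$ as $t\to 0$, and $\Lambda^+ G^\C_\sigma$ retracts onto $K^\C$, which is connected since $G^\C$ is — so the product, and hence the cell, is connected.

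For uniqueness, if $g_-g_+=\tilde g_-\tilde g_+$ on the cell, then $\tilde g_-^{-1}g_-=\tilde g_+ g_+^{-1}$ extends holomorphically to all of $\mathbb{P}^1$, hence is constant by Liouville, and equals $e$ because the left-hand side is $e$ at $\infty$; thus the factors coincide. The twisting is then free: if $g\in\Lambda G^\C_\sigma$ lies in the big cell with factorization $g=g_-g_+$, then $\lambda\mapsto\sigma(g_-(-\lambda))$ and $\lambda\mapsto\sigma(g_+(-\lambda))$ are admissible factors with the same normalization, so uniqueness forces $\sigma(g_\mp(-\lambda))=g_\mp(\lambda)$, i.e.\ $g_\pm\in\Lambda^\pm G^\C_\sigma$; applying uniqueness to the $G^\C$-valued versus $GL(n,\C)$-valued factorizations likewise shows the factors are $G^\C$-valued, so the big cell of $\Lambda G^\C_\sigma$ is precisely its intersection with the $GL(n,\C)$-big cell. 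Finally, the multiplication map $\Lambda^-_* G^\C_\sigma\times\Lambda^+ G^\C_\sigma\to\Lambda G^\C_\sigma$ is complex analytic as the restriction of the group multiplication of a complex Banach Lie group, it is bijective onto the big cell by the above, and at $(e,e)$ its differential is the sum map $\Lambda^-_*\mathfrak g^\C_\sigma\oplus\Lambda^+\mathfrak g^\C_\sigma\to\Lambda\mathfrak g^\C_\sigma$, a Banach-space isomorphism by the Fourier decomposition recalled in the appendix; left and right translation propagate this to every point of the cell, so by the analytic inverse function theorem for Banach manifolds the inverse splitting map is also complex analytic.

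The main obstacle I anticipate is the honest proof of the untwisted factorization together with openness and analyticity in the Wiener topology: one must run the Fredholm/Toeplitz-operator machinery (or the Birkhoff--Grothendieck argument with the requisite regularity) carefully enough to know that the partial indices $k_i$ are locally constant on the big cell and that the trivializing frames depend complex-analytically on $g$. Once that analytic core is secured, the remaining steps — the Liouville uniqueness argument, the $\sigma$-equivariance and $G^\C$-valuedness bookkeeping, the contractibility computations, and the inverse-function-theorem step — are routine.
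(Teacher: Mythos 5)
There is nothing in the paper to compare your argument with: the Birkhoff decomposition is quoted in the appendix as a known result, attributed to Birkhoff for $GL(n,\mathbb{C})$ and, for (twisted) loop groups, referred to Pressley--Segal \cite{PS} and \cite{DPW}; no proof is given there. Your sketch follows exactly the route of that literature --- Birkhoff--Grothendieck/Toeplitz factorization for $GL(n,\mathbb{C})$, Liouville uniqueness, the $\sigma$-equivariance-plus-uniqueness trick to force the factors into the twisted subgroup (which is precisely how \cite{DPW} reduce the twisted case to the untwisted one), and the Banach inverse function theorem for analyticity of the splitting.

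There is, however, one step that fails as written: the claim that ``applying uniqueness to the $G^{\mathbb{C}}$-valued versus $GL(n,\mathbb{C})$-valued factorizations shows the factors are $G^{\mathbb{C}}$-valued.'' Uniqueness in $\Lambda GL(n,\mathbb{C})$ only tells you that \emph{if} a $G^{\mathbb{C}}$-valued factorization exists, it must coincide with the $GL(n,\mathbb{C})$ one; existence of such a factorization is exactly what has to be proved, so the argument is circular. To close it you need an actual input: for instance, the clutching construction applied to $g\colon S^1\rightarrow G^{\mathbb{C}}$ produces a holomorphic principal $G^{\mathbb{C}}$-bundle over $\mathbb{P}^1$, Grothendieck's theorem reduces it to a maximal torus with a cocharacter $k$, faithfulness of the chosen representation shows that triviality of the associated vector bundle (all partial indices zero) forces $k=0$, hence triviality of the principal bundle, and only then do the two trivializing frames give a $G^{\mathbb{C}}$-valued factorization to which your uniqueness argument applies; alternatively one cites the intrinsic proof for general complex semisimple $G^{\mathbb{C}}$ in Pressley--Segal. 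Beyond this, note that the analytic core you defer at the end --- Wiener-algebra regularity of the factors, local constancy of the partial indices, and complex-analytic dependence of $g_{\pm}$ on $g$ --- is not a routine afterthought: it is where the openness and ``complex analytic diffeomorphism'' assertions of the theorem actually live (likewise, density needs slightly more than ``the lower strata have positive codimension,'' e.g.\ a Baire or Fredholm-determinant argument), so as it stands the proposal is a correct plan along the standard lines with these two holes, one substantive and one acknowledged.
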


The second crucial loop group splitting theorem is the following

\begin{theorem}[Iwasawa decomposition]

Let $G$ be a simply connected  real Lie group. Then the multiplication map 
$$\Lambda G_{\sigma}\times \Lambda^{+}_B G_{\sigma}^{\mathbb C} 
\rightarrow \Lambda G_{\sigma}^{\mathbb C}$$
is a real-analytic diffeomeorphism onto the open, connected and dense subset
$\Lambda G_{\sigma} \cdot \Lambda^{+}_B G_{\sigma}^{\mathbb C} $ of  $.\Lambda G_{\sigma}^{\mathbb C}$
\end{theorem}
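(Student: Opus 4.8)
The plan is to prove the statement in four steps: an infinitesimal (Lie‑algebra) splitting, uniqueness of the factorization, openness together with the local‑diffeomorphism property, and finally density, which is where the genuine work lies. For Step~1 I would establish the direct‑sum decomposition of closed subspaces $\Lambda\mathfrak{g}^{\mathbb{C}}_{\sigma}=\Lambda\mathfrak{g}_{\sigma}\oplus\Lambda^{+}_{\mathfrak{b}}\mathfrak{g}^{\mathbb{C}}_{\sigma}$, where $\Lambda^{+}_{\mathfrak{b}}\mathfrak{g}^{\mathbb{C}}_{\sigma}=\{\xi\in\Lambda\mathfrak{g}^{\mathbb{C}}_{\sigma}\mid \xi_{l}=0 \text{ for } l<0,\ \xi_{0}\in\mathfrak{b}\}$ and $\mathfrak{b}=\operatorname{Lie}B$. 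Writing $\xi=\sum_{l}\lambda^{l}\xi_{l}$ and letting $\tau$ be the conjugation fixing $\mathfrak{g}$, the $\Lambda^{+}_{\mathfrak{b}}$‑component of $\xi$ is forced to be $0$ in degrees $l<0$, to equal $\xi_{l}-\tau(\xi_{-l})$ in degrees $l>0$, and in degree $0$ to be the $\mathfrak{b}$‑part of $\xi_{0}\in\mathfrak{k}^{\mathbb{C}}$ under the finite‑dimensional Iwasawa decomposition $\mathfrak{k}^{\mathbb{C}}=\mathfrak{k}\oplus\mathfrak{b}$ of $\mathfrak{k}^{\mathbb{C}}$ (recall $\xi_{0}$ is automatically $\sigma$‑fixed, hence in $\mathfrak{k}^{\mathbb{C}}$). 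One checks the complementary term lies in $\Lambda\mathfrak{g}_{\sigma}$, directness follows from uniqueness of the splitting $\mathfrak{k}^{\mathbb{C}}=\mathfrak{k}\oplus\mathfrak{b}$, and continuity in the Wiener norm is routine.

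For Step~2, if $F_{1}V_{1}=F_{2}V_{2}$ then $F_{2}^{-1}F_{1}=V_{2}V_{1}^{-1}\in\Lambda G_{\sigma}\cap\Lambda^{+}_{B}G^{\mathbb{C}}_{\sigma}$, so it suffices to show this intersection is trivial. Any $g\in\Lambda G_{\sigma}\cap\Lambda^{+}G^{\mathbb{C}}_{\sigma}$ is holomorphic on the closed unit disk and sends $S^{1}$ into the totally real form $G=\operatorname{Fix}\tau$; the reflection $\lambda\mapsto\tau(g(1/\bar\lambda))$ glues to $g$ across $S^{1}$ and yields a holomorphic map $\mathbb{P}^{1}\to G^{\mathbb{C}}$, hence a constant matrix, which must lie in $G\cap K^{\mathbb{C}}=K$; lying in addition in $\Lambda^{+}_{B}G^{\mathbb{C}}_{\sigma}$ forces it into $K\cap B=\{e\}$. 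Thus the multiplication map is injective. For Step~3, by Step~1 the differential of multiplication at $(e,e)$ is the sum map $\Lambda\mathfrak{g}_{\sigma}\oplus\Lambda^{+}_{\mathfrak{b}}\mathfrak{g}^{\mathbb{C}}_{\sigma}\to\Lambda\mathfrak{g}^{\mathbb{C}}_{\sigma}$, an isomorphism of Banach spaces; translating on the left by $\Lambda G_{\sigma}$ and on the right by $\Lambda^{+}_{B}G^{\mathbb{C}}_{\sigma}$ shows the differential is everywhere an isomorphism. The Banach inverse function theorem then makes the map a local real‑analytic diffeomorphism, hence, with Step~2, a real‑analytic diffeomorphism onto its image, and the image $\Lambda G_{\sigma}\cdot\Lambda^{+}_{B}G^{\mathbb{C}}_{\sigma}$ is open; it is connected, being the orbit of $e$ under the connected groups acting on the two sides (one uses the paper's standing connectedness hypotheses, or passes to identity components).

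The remaining and main point is Step~4, density. Invoking the Birkhoff decomposition already quoted, the big cell is open and dense, and every $g_{+}\in\Lambda^{+}G^{\mathbb{C}}_{\sigma}$ already lies in the image: splitting $g_{+}(0)=k_{0}b_{0}\in KB$ gives $g_{+}=k_{0}\cdot(k_{0}^{-1}g_{+})$ with $k_{0}\in K\subset\Lambda G_{\sigma}$ and $k_{0}^{-1}g_{+}\in\Lambda^{+}_{B}G^{\mathbb{C}}_{\sigma}$. Hence it is enough to show that a dense subset of $\Lambda^{-}_{*}G^{\mathbb{C}}_{\sigma}$ admits the decomposition, since the preimage of such a set under the (continuous) Birkhoff projection, intersected with the big cell, is then dense in $\Lambda G^{\mathbb{C}}_{\sigma}$.

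This last reduction is where the hypothesis that $G$ is a possibly noncompact real form becomes the real obstacle. For compact $G$ one has the classical Pressley--Segal result, where in fact the image is all of $\Lambda G^{\mathbb{C}}_{\sigma}$; that argument relies on compactness to close up the image, and this shortcut is not available here. I would instead approximate elements of $\Lambda^{-}_{*}G^{\mathbb{C}}_{\sigma}$ by twisted Laurent‑polynomial loops (dense in the Wiener norm) and argue by induction on the top Fourier degree $N$, at each stage peeling off factors in $\Lambda G_{\sigma}$ and in $\Lambda^{+}_{B}G^{\mathbb{C}}_{\sigma}$ built from the finite‑dimensional Iwasawa decomposition of $G^{\mathbb{C}}$ applied to the relevant leading coefficients, thereby lowering $N$. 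The delicate feature is that this algorithm can break down on a thin exceptional set, so one must verify that the loops for which it succeeds still form a dense subset — which is automatically open by Step~3. This is precisely the content of Kellersch's generalized Iwasawa splitting, and I would either reproduce that argument or, alternatively, realize $\Lambda G^{\mathbb{C}}_{\sigma}/\Lambda^{+}G^{\mathbb{C}}_{\sigma}$ inside a Grassmannian and show that the $\Lambda G_{\sigma}$‑orbit of the base point is open and dense there.
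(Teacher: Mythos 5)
There is nothing in the paper to compare against: the paper does not prove this theorem, it quotes it, citing Pressley--Segal \cite{PS} for the untwisted case and \cite{DPW} for the twisted extension (the noncompact twisted case is really Kellersch's generalized Iwasawa splitting). So your proposal must stand on its own, and judged that way it has a genuine gap. Steps 1--3 are essentially fine: the Lie-algebra splitting, the Schwarz-reflection argument for injectivity, and the equivariance/inverse-function-theorem argument for openness and the local (hence global, by injectivity) diffeomorphism property are all standard --- though note that they silently use that $K^{\mathbb{C}}=KB$ with $K\cap B=\{e\}$ and $\mathfrak{k}^{\mathbb{C}}=\mathfrak{k}\oplus\mathfrak{b}$ is a genuine global Iwasawa-type decomposition, which the paper's appendix assumes by fiat but which is itself a nontrivial restriction when $K$ is noncompact.

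The gap is Step 4, which you yourself identify as the heart of the matter and then do not carry out: ``reproduce Kellersch's argument or realize the homogeneous space in a Grassmannian'' is a plan, not a proof, and it is exactly the part of the theorem that cannot be obtained by soft Banach-Lie-group arguments. Worse, the Laurent-polynomial approximation with induction on the top Fourier degree cannot establish density in the stated generality, because for noncompact real forms the single cell $\Lambda G_{\sigma}\cdot\Lambda^{+}_{B}G^{\mathbb{C}}_{\sigma}$ is in general \emph{not} dense: Kellersch's theorem produces a decomposition of $\Lambda G^{\mathbb{C}}_{\sigma}$ into $\Lambda G_{\sigma}\times\Lambda^{+}G^{\mathbb{C}}_{\sigma}$ double cosets with, typically, several open cells, and only the union of the finitely many open cells is dense; the $SU(1,1)$-twisted case (Brander--Rossman--Schmitt) is an explicit example with two open Iwasawa cells. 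So any completion of your argument must either add hypotheses (e.g.\ $G$ compact, where Pressley--Segal already gives a global decomposition and density is vacuous) or prove a correspondingly weaker density statement; the ``thin exceptional set'' you hope to avoid can in fact contain a whole open cell. In short: the reduction to loops of the form $g_{-}g_{+}$ and the bookkeeping around it are fine, but the density claim --- the only nontrivial content of the theorem --- remains unproved in your proposal, and is precisely the point the paper itself settles only by citation.
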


This result is well known for untwisted loop groups (see, e.g. Pressley-Segal \cite{PS})  and was extended to the twisted setting in  \cite{DPW}.  

More information concerning the loop group method can be found in \cite{DPW} and papers referring to
\cite{DPW}.

{\footnotesize
\def\refname{Reference}

}

\vspace{2mm}

{\footnotesize }

Josef F. Dorfmeister

Fakult\" at f\" ur Mathematik,

TU-M\" unchen, Boltzmannstr. 3,

D-85747, Garching, Germany

{\em E-mail address}: josef.dorfmeister@tum.de\\

\end{document}